\newtheorem{theorem}{Theorem}[section]
\newtheorem{lemma}[theorem]{Lemma}
\newtheorem{corollary}[theorem]{Corollary}
\theoremstyle{definition}
\newtheorem{definition}[theorem]{Definition}
\newtheorem{remark}[theorem]{Remark}
\newtheorem{assumption}[theorem]{Assumption}
\newcommand{\IR}{\mathbb{R}}
\newcommand{\IC}{\mathbb{C}}
\newcommand{\IN}{\mathbb{N}}
\newcommand{\IP}{\mathbb{P}}
\newcommand{\cH}{\mathcal{H}}
\newcommand{\cC}{\mathcal{C}}
\newcommand{\cL}{\mathcal{L}}
\renewcommand{\L}{\mathrm{L}}
\newcommand{\C}{\mathrm{C}}
\renewcommand{\H}{\mathrm{H}}
\renewcommand{\S}{\mathrm{S}}
\newcommand{\fa}{\mathfrak{a}}
\newcommand{\fb}{\mathfrak{b}}
\newcommand{\e}{\mathrm{e}}
\newcommand{\ii}{\mathrm{i}}
\renewcommand{\d}{\mathrm{d}}
\newcommand{\loc}{\mathrm{loc}}
\renewcommand\Re{\operatorname{Re}}
\newcommand{\Lop}{\mathcal{L}}
\newcommand{\divergence}{\operatorname{div}}
\DeclareMathOperator{\supp}{supp}
\DeclareMathOperator{\dist}{dist}
\DeclareMathOperator{\Id}{Id}
\DeclareMathOperator{\dom}{\mathcal{D}}
\numberwithin{equation}{section}
\title[Off-diagonal decay for the generalized Stokes semigroup]{On off-diagonal decay properties of the generalized Stokes semigroup with bounded measurable coefficients}
\author{Patrick Tolksdorf}
\address{Institut f\"ur Mathematik, Johannes Gutenberg-Universit\"at Mainz, Staudingerweg 9, 55099 Mainz, Germany}
\email{tolksdorf@uni-mainz.de}
\subjclass[2010]{}
\date{\today}
\thanks{}
\begin{document}
\begin{abstract}
We investigate off-diagonal decay properties of the generalized Stokes semigroup with bounded measurable coefficients on $\L^2_{\sigma} (\IR^d)$. Such estimates are well-known for elliptic equations in the form of pointwise heat kernel bounds and for elliptic systems in the form of integrated off-diagonal estimates. On our way to unveil this off-diagonal behavior we prove resolvent estimates in Morrey spaces $\L^{2 , \nu} (\IR^d)$ with $0 \leq \nu < 2$.
\end{abstract}
\maketitle
%%%%%%%%%%%%%%%%%%%%%%%%%%%%%%%%%%%%%%%%%%%%%%%%%%%%%%%%%%%%%%%%%%%%%%%%%%%%%%%%%%%%%%%%%%%%%%%%%%%%%%%%%%%%%%%%%%%%%%%%%%%%%%%%%%%%%%%%%%%%%%%%%%%%%%%%%%%%%%%%%%%%

\section{Introduction}

\noindent In this note we study decay properties of the resolvent as well as the associated semigroup of the generalized Stokes operator $A$ on $\L^2_{\sigma} (\IR^d)$. This operator is formally given by
\begin{align*}
 A u = - \divergence(\mu \nabla u) + \nabla \phi, \quad \divergence(u) = 0 \quad \text{in} \quad \IR^d.
\end{align*}
Here, the function $u$ denotes a fluid velocity and $\phi$ denotes the to the generalized Stokes equations associated pressure function. The matrix of coefficients is merely supposed to be essentially bounded and ellipticity is enforced by a G\r{a}rding type inequality. \par
If the elliptic counterpart $L u = - \divergence (\mu \nabla u)$ is considered, then certain off-diagonal decay properties of the corresponding heat semigroup are well-known. For example, if $L$ represents an elliptic \textit{equation} with \textit{real coefficients}, then the kernel $k_t (\cdot , \cdot)$ of the associated heat semigroup $(\e^{- t L})_{t \geq 0}$ satisfies heat kernel bounds
\begin{align*}
 \lvert k_t (x , y) \rvert \leq C t^{- \frac{d}{2}} \e^{- c \frac{\lvert x - y \rvert^2}{t}}.
\end{align*}
It is well-known that if $L$ represents an elliptic \textit{system} with \textit{real/complex coefficients} these heat kernel bounds seize to be valid~\cite{Mazya_Nazarov_Plamenevskii, Davies, Frehse}. The natural substitute for heat kernel bounds for elliptic systems are so-called off-diagonal estimates. The simplest version are $\L^2$ off-diagonal estimates for the heat semigroup, its gradient, or also for $L$ applied to the heat semigroup and are of the form
\begin{align}
\label{Eq: Elliptic off-diagonal}
 \| \e^{- t L} f \|_{\L^2 (F)} + t^{\frac{1}{2}} \| \nabla \e^{- t L} f \|_{\L^2 (F)} + t \| L \e^{- t L} f \|_{\L^2 (F)} \leq C \e^{- c \frac{\dist(E , F)^2}{t}} \| f \|_{\L^2 (E)},
\end{align}
where $E , F \subset \IR^d$ are closed subsets and $f \in \L^2 (\IR^d)$ has its support in $E$. Such estimates build the foundation for many deep results in the harmonic analysis of elliptic operators with rough coefficients as can be seen, e.g., in the seminal works on the Kato square root problem~\cite{Auscher_Hofmann_Lacey_McIntosh_Tchamitchian} as well as on mapping properties of Riesz transforms on $\L^p$-spaces~\cite{Auscher_Memoirs} or the well-posedness results of Navier--Stokes like equations with initial data in $\mathrm{BMO}^{-1}$~\cite{Auscher_Frey} in the spirit of Koch and Tataru~\cite{Koch_Tataru}. \par
The spirit of how these off-diagonal estimates~\eqref{Eq: Elliptic off-diagonal} are used is as follows. For example, one might be interested in estimating an expression that involves $\e^{- t L} f$ in some sense. One then decomposes $\IR^d$ into carefully chosen disjoint sets, e.g., into annuli of the form $\cC_k := \overline{B(x_0 , 2^{k + 1} r)} \setminus B(x_0 , 2^k r)$, $k \in \IN$, and $\cC_0 := \overline{B(x_0 , 2 r)}$. Then one would estimate by virtue of~\eqref{Eq: Elliptic off-diagonal}
\begin{align}
\label{Eq: Off-diagonal calculation}
\begin{aligned}
 \| \e^{- t L} f \|_{\L^2 (B(x_0 , r))} &\leq \sum_{k = 0}^{\infty} \| \e^{- t L} \chi_{\cC_k} f \|_{\L^2 (B(x_0 , r))} \\
 &\leq C \| f \|_{\L^2 (B(x_0 , 2 r))} + C \sum_{k = 0}^{\infty} \e^{- c \frac{r^2}{t} 2^{2 k}} \| f \|_{\L^2 (B(x_0 , 2^{k + 1} r))}
\end{aligned}
\end{align}
and proceed with the proof in a certain manner, depending on the particular situation. \par
The question, whose study we want to initiate here, is whether or not the generalized Stokes semigroup $(\e^{- t A})_{t \geq 0}$ satisfies off-diagonal decay estimates and if so, how they look like. The main problem is already, that in a calculation of the form~\eqref{Eq: Off-diagonal calculation} one multiplies $f$ by a characteristic function. This in general destroys the solenoidality of the function $f$. Thus, if one wants to perform such an operation, one is urged to think about \textit{how to extend} $\e^{- t A}$ to all of $\L^2 (\IR^d)$. In many situations, the gold standard is to extend $\e^{- t A}$ to all of $\L^2 (\IR^d)$ by studying $\e^{- t A} \IP$, where $\IP$ denotes the Helmholtz projection on $\L^2 (\IR^d)$. Thus, in order to imitate the calculation performed in~\eqref{Eq: Off-diagonal calculation} one would need that off-diagonal bounds for $\e^{- t A} \IP$ are valid. However, estimates of the form
\begin{align}
\label{Eq: Wrong off-diagonal}
 \| \e^{- t A} \IP f \|_{\L^2 (F)} \leq g(\tfrac{\dist(E , F)^2}{t}) \| f \|_{\L^2 (E)}
\end{align}
with $g : [0 , \infty) \to [0 , \infty)$ satisfying $\lim_{x \to \infty} g(x) = 0$ and $f$ being supported in $E$ are in general \textit{wrong}. The reason is simple: fix any closed subset $E \subset \IR^d$ and let $F \subset \IR^d$ denote any other closed set that satisfies $\dist(E , F) > 0$. On the one hand, since $(\e^{- t A})_{t \geq 0}$ is strongly continuous on $\L^2_{\sigma} (\IR^d)$ with $\e^{- 0 A} f = f$ one has that
\begin{align*}
 \lim_{t \to 0} \| \e^{- t A} \IP f \|_{\L^2 (F)} = \| \IP f \|_{\L^2 (F)}.
\end{align*}
On the other hand~\eqref{Eq: Wrong off-diagonal} together with the condition on $g$ implies that $\| \IP f \|_{\L^2 (F)} = 0$. This implies that $\supp(\IP f) \subset E$ whenever $f \in \L^2 (\IR^d)$ with $\supp (f) \subset E$. As a consequence, the Helmholtz projection would be a local operator, which is known to be wrong. \par
Thus, in order to establish off-diagonal bounds for the generalized Stokes semigroup, one either needs to find the correct extension of the generalized Stokes semigroup to all of $\L^2 (\IR^d)$ or one needs to avoid arguments that destroy the solenoidality of $f$. In particular, this rules out standard proofs of off-diagonal estimates that are used in the elliptic situation as, e.g., Davies' trick~\cite{Davies_trick}. \par
The main result of this note is an estimate of the type~\eqref{Eq: Off-diagonal calculation}. Let us introduce some notation to state this in a precise form: 

\begin{assumption}
\label{Ass: Coefficients}
The coefficients $\mu = (\mu_{\alpha \beta}^{i j})_{\alpha , \beta , i , j = 1}^d$ with $\mu_{\alpha \beta}^{i j} \in \L^{\infty} (\IR^d ; \IC)$ for all $1 \leq \alpha , \beta , i , j \leq d$ satisfy for some $\mu_{\bullet} , \mu^{\bullet} > 0$ the inequalities
\begin{align}
\label{Eq: Ellipticity}
 \Re \sum_{\alpha , \beta , i , j = 1}^d \int_{\IR^d} \mu^{i j}_{\alpha \beta} \partial_{\beta} u_j \overline{\partial_{\alpha} u_i} \, \d x \geq \mu_{\bullet} \| \nabla u \|_{\L^2}^2 \qquad (u \in \H^1 (\IR^d ; \IC^d))
\end{align}
and
\begin{align}
\label{Eq: Boundedness}
 \max_{1 \leq i , j , \alpha , \beta \leq d} \| \mu^{i j}_{\alpha \beta} \|_{\L^{\infty}} \leq \mu^{\bullet}.
\end{align}
\end{assumption} 

The operator $A$ is realized on $\L^2_{\sigma} (\IR^d) := \{ f \in \L^2 (\IR^d ; \IC^d) : \divergence(f) = 0 \}$ as follows. Let $\H^1_{\sigma} (\IR^d) := \{ f \in \H^1 (\IR^d ; \IC^d) : \divergence(f) = 0 \}$. Define the sesquilinear form
\begin{align*}
 \fa : \H^1_{\sigma} (\IR^d) \times \H^1_{\sigma} (\IR^d) \to \IC, \quad (u , v) \mapsto \sum_{\alpha , \beta , i , j = 1}^d \int_{\IR^d} \mu_{\alpha \beta}^{i j} \partial_{\beta} u_j \overline{\partial_{\alpha} v_i} \, \d x
\end{align*}
and define the domain of $A$ on $\L^2_{\sigma} (\IR^d)$ as
\begin{align*}
 \dom(A) := \bigg\{ u \in \H^1_{\sigma} (\IR^d) : \, \exists f \in \L^2_{\sigma} (\IR^d) \text{ such that } \forall v \in \H^1_{\sigma} (\IR^d) \text{ it holds } \fa (u , v) = \int_{\IR^d} f \cdot \overline{v} \, \d x  \bigg\}\cdotp
\end{align*}

The main result of this note is the following theorem:

\begin{theorem}
\label{Thm: Generalized Stokes semigroup}
Let $d \geq 2$ and let $\mu$ satisfy Assumption~\ref{Ass: Coefficients} with constants $\mu^{\bullet} , \mu_{\bullet} > 0$. For all $\nu \in (0 , 2)$ there exists $C > 0$ such that for all $x_0 \in \IR^d$, $r > 0$, $t > 0$, and $f \in \L^2_{\sigma} (\IR^d)$ it holds
\begin{align*}
 \| \e^{- t A} f \|_{\L^2 (B(x_0 , r))} &+ t \| A \e^{- t A} f \|_{\L^2 (B(x_0 , r))} \\
 &\qquad\qquad \leq C \| f \|_{\L^2 (B(x_0 , 2 r))} + C \sum_{k = 2}^{\infty} \bigg( 1 + \frac{2^{2 k} r^2}{t} \bigg)^{- \frac{\nu}{4}} \| f \|_{\L^2 (B(x_0 , 2^k r))}.
\end{align*}
Moreover, for all $F \in \L^2 (\IR^d ; \IC^{d \times d})$ it holds
\begin{align*}
 t^{\frac{1}{2}} \| \e^{- t A} \IP \divergence(F )\|_{\L^2 (B(x_0 , r))} \leq C \| F \|_{\L^2 (B(x_0 , 2 r))} + C \sum_{k = 2}^{\infty} \bigg( 1 + \frac{2^{2 k} r^2}{t} \bigg)^{- \frac{\nu}{4}} \| F \|_{\L^2 (B(x_0 , 2^k r))}.
\end{align*}
In both estimates, the constant $C$ only depends on $\mu_{\bullet}$, $\mu^{\bullet}$, $d$, and $\nu$.
\end{theorem}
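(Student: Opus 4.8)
The plan is to reduce everything to the resolvent problem $\lambda u - \divergence(\mu\nabla u) + \nabla\phi = f$, $\divergence u = 0$: first to a single local energy estimate in which the pressure is kept under control, and then to its two exploitations — an iteration over dyadic annuli producing bounds for $(\lambda + A)^{-1}$ and its gradient on the Morrey scale $\L^{2,\nu}_\sigma(\IR^d)$, and an iteration of the same kind, but over the annuli separating two balls, yielding resolvent off-diagonal bounds which, fed into a Cauchy integral, give the semigroup statement. No step multiplies $f$ by a bare characteristic function; the solenoidality is respected throughout via the Helmholtz projection.

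\textbf{Step 1 (local energy estimate).} Since the G\r{a}rding inequality and boundedness of $\fa$ make $A$ sectorial on $\L^2_\sigma(\IR^d)$, the resolvent $(\lambda+A)^{-1}$ exists for $\lambda$ in a sector containing $\IR_{>0}$; fix such $\lambda$, put $u := (\lambda+A)^{-1}f$ and let $\phi$ be the associated pressure. For a ball $B = B(z,R)$ take a cutoff $\eta$ with $\eta\equiv 1$ on $B$, $\supp\eta\subset 2B$, $|\nabla\eta|\lesssim R^{-1}$, and test the equation against $\eta^2\bar u$. Because $\divergence u = 0$, the pressure enters only through $-2\int\phi\,\eta\nabla\eta\cdot\bar u$, and since $\int\nabla(\eta^2)\cdot u = 0$ one may subtract the mean $(\phi)_{2B}$ there; the resulting term is then controlled by the inf--sup (negative-norm) estimate $\|\phi - (\phi)_{2B}\|_{\L^2(2B)}\lesssim R\,\|\nabla\phi\|_{\H^{-1}(2B)}$ together with $\nabla\phi = f - \lambda u + \divergence(\mu\nabla u)$, which gives $\|\phi-(\phi)_{2B}\|_{\L^2(2B)}\lesssim R^2\|f\|_{\L^2(2B)} + |\lambda|R^2\|u\|_{\L^2(2B)} + R\|\nabla u\|_{\L^2(2B)}$. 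Applying the G\r{a}rding inequality to $\eta u$ (absorbing the commutator pieces $\nabla\eta\otimes u$) and then Young's inequality produces a Caccioppoli-type inequality whose left side is $|\lambda|\int_B|u|^2 + \int_B|\nabla u|^2$ and whose right side involves $u$ and $f$ on $2B$ only (with a hole-filling term $\tfrac12\int_{2B}|\nabla u|^2$). In particular, when $f$ vanishes on $2B$, the $f$-terms drop out, i.e.\ one has a source-free version.

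\textbf{Step 2 (Morrey resolvent estimates).} Iterating Step 1 over the dyadic annuli about $z$ — using Giaquinta's lemma to dispose of the hole-filling term — gives, for $f\in\L^2_\sigma\cap\L^{2,\nu}_\sigma$ and $\lambda$ in the sector, the bounds $\|(\lambda+A)^{-1}f\|_{\L^{2,\nu}_\sigma}\lesssim|\lambda|^{-1}\|f\|_{\L^{2,\nu}_\sigma}$ and $\|\nabla(\lambda+A)^{-1}f\|_{\L^{2,\nu}}\lesssim|\lambda|^{-1/2}\|f\|_{\L^{2,\nu}_\sigma}$, the iteration being summable exactly in the range $\nu < 2$ — the threshold is where the pressure, which the inf--sup estimate places one scaling degree below $\nabla u$, stops spoiling the geometric sum. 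Since $A^\ast$ satisfies Assumption~\ref{Ass: Coefficients} with the same constants, dualizing the gradient estimate gives $\|(\lambda+A)^{-1}\IP\divergence(F)\|_{\L^{2,\nu}}\lesssim|\lambda|^{-1/2}\|F\|_{\L^{2,\nu}}$. These Morrey resolvent bounds are the statement announced in the abstract.

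\textbf{Step 3 (off-diagonal semigroup bounds).} Fix $x_0,r,t$ and a smooth partition $1 = \eta + \sum_{k\ge 1}\theta_k$ with $\supp\eta\subset B(x_0,2r)$, $\eta\equiv 1$ on $B(x_0,r)$, and $\supp\theta_k\subset B(x_0,2^{k+1}r)\setminus B(x_0,2^{k-1}r)$. Linearity of $\IP$ gives $f = \IP(\eta f) + \sum_{k\ge1}\IP(\theta_k f)$ in $\L^2_\sigma$ (the accumulated pressure correction is harmonic with admissible decay, hence zero). The leading term follows from $\|\e^{-tA}\IP(\eta f)\|_{\L^2(B(x_0,r))}\le\|\IP(\eta f)\|_{\L^2}\le\|f\|_{\L^2(B(x_0,2r))}$. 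For $k\ge 1$ set $g_k := \IP(\theta_k f)$ and $\rho_k := 2^k r$; it remains to show $\|\e^{-tA}g_k\|_{\L^2(B(x_0,r))}\lesssim(1 + \rho_k^2/t)^{-\nu/4}\|\theta_k f\|_{\L^2}$. For $t\ge\rho_k^2$ this is contractivity of $\e^{-tA}$. For $t < \rho_k^2$ one writes $\e^{-tA}g_k = \tfrac{1}{2\pi\ii}\int_\Gamma\e^{t\lambda}(\lambda+A)^{-1}g_k\,\d\lambda$ with $\Gamma$ scaled so that $|\lambda|\sim t^{-1}$ on its relevant part, and estimates $\|(\lambda+A)^{-1}g_k\|_{\L^2(B(x_0,r))}$ by iterating Step 1 across the annuli separating $B(x_0,r)$ from $\supp\theta_k f$: on these annuli $g_k$ differs from a gradient only (absorbable into the pressure), so $u := (\lambda+A)^{-1}g_k$ satisfies the source-free resolvent equation there, and the source-free local estimate propagates inward with a gain, contributing the factor $(|\lambda|\rho_k^2)^{-\nu/4}$ — so, after integrating the Cauchy integral, the factor $(t/\rho_k^2)^{\nu/4}$. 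Summing over $k$ and relabelling yields the first inequality. The estimates for $t\,\|A\e^{-tA}f\|_{\L^2(B(x_0,r))}$ and $t^{1/2}\|\e^{-tA}\IP\divergence(F)\|_{\L^2(B(x_0,r))}$ follow verbatim, inserting $\lambda(\lambda+A)^{-1}$, respectively $(\lambda+A)^{-1}\IP\divergence$, into the Cauchy integral and using the matching bounds from Step 2.

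\textbf{Main obstacle.} Everything hinges on the pressure. Unlike in the elliptic case, localizing the resolvent equation generates a genuinely nonlocal pressure term; the only available handle is the inf--sup estimate, and using it both confines the whole scheme to $\nu < 2$ and degrades the off-diagonal decay from Gaussian to merely polynomial — which, by the argument following \eqref{Eq: Wrong off-diagonal}, is the best that can be true. Propagating the pressure correctly through the two iterations (Steps 2 and 3), extracting from the source-free local estimate a genuine inward gain rather than a mere boundedness, and verifying that the accumulated Helmholtz corrections in Step 3 vanish are the technical heart of the proof.
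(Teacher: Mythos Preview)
Your scheme has a genuine gap in the pressure handling, and it is precisely the point you flag as the ``main obstacle''. The local inf--sup estimate you invoke in Step~1 necessarily produces a term $|\lambda|R\,\|u\|_{\L^2(2B)}$ in the bound for $\|\phi-(\phi)_{2B}\|_{\L^2(2B)}$: testing $\nabla\phi=f-\lambda u+\divergence(\mu\nabla u)$ against $v\in\H^1_0(2B)$ and using Poincar\'e gives
\[
\|\phi-(\phi)_{2B}\|_{\L^2(2B)} \lesssim R\|f\|_{\L^2(2B)} + |\lambda|R\,\|u\|_{\L^2(2B)} + \|\nabla u\|_{\L^2(2B)}.
\]
When you feed this back into the Caccioppoli inequality (say in the form of Lemma~\ref{Lem: Caccioppoli}), the middle term contributes $C_0\,|\lambda|\,\|u\|_{\L^2(2B)}^2$ on the right-hand side with a constant $C_0$ depending on the Bogovski\u{\i} constant and $\mu^\bullet$ --- \emph{not} a constant you can make small by choosing Young parameters. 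Consequently the source-free local estimate reads, at best,
\[
|\lambda|\int_B|u|^2 + \int_B|\nabla u|^2 \;\le\; C_0\,|\lambda|\int_{2B}|u|^2 \;+\; \frac{C}{|\lambda|R^2}\int_{2B}|\nabla u|^2 \;+\; \frac{C}{R^2}\int_{2B}|u|^2,
\]
and since $C_0$ need not be $<1$, iterating this inward gives no decay at all. Your claimed gain $(|\lambda|\rho_k^2)^{-\nu/4}$ in Step~3 therefore does not follow, and the same obstruction already affects the Morrey iteration in Step~2.

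The paper avoids this by never using a purely local pressure bound. Because $\divergence f=\divergence u=0$ on all of $\IR^d$, one has $\Delta\phi=\sum_{i,j}\partial_i\partial_j(\mu\nabla u+F)_{ij}$ globally, so the local oscillation of $\phi$ is controlled by $\nabla u$ and $F$ on \emph{all} dyadic annuli, with explicit decay and with \emph{no} $\lambda u$ contribution --- this is Lemma~\ref{Lem: Non-local pressure estimate}. That non-local pressure estimate is what makes an inward gain possible. The paper also proceeds differently at the structural level: rather than decomposing $f$ via $\IP(\theta_k f)$, it fixes $u=(\lambda+A)^{-1}f$ and splits $u=u_1+u_2$ in $B(x_0,2r)$, where $u_1$ solves a local Neumann--Stokes problem with data $f|_{B(x_0,2r)}$ (controlled directly by~\eqref{Eq: Resolvent estimate Neumann}) and $u_2$ is source-free in $B(x_0,2r)$; the non-local pressure lemma, combined with the non-local Caccioppoli inequality of Theorem~\ref{Thm: Non-local Caccioppoli} and the Morrey estimate of Theorem~\ref{Thm: Nonlocal resolvent estimate}, then yields Theorem~\ref{Thm: Off-diagonal for resolvent}, from which the semigroup statement follows by the Cauchy integral as you describe.
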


As a corollary of Theorem~\ref{Thm: Generalized Stokes semigroup} one derives the following off-diagonal estimates.

\begin{corollary}
\label{Cor: Generalized Stokes semigroup}
Let $d \geq 2$ and let $\mu$ satisfy Assumption~\ref{Ass: Coefficients} with constants $\mu^{\bullet} , \mu_{\bullet} > 0$. For all $\nu \in (0 , 2)$ there exists $C > 0$ such that for all $x_0 \in \IR^d$, $r > 0$, $k_0 \in \IN$ with $k_0 \geq 2$, $t > 0$, and $f \in \L^2_{\sigma} (\IR^d)$ with $\supp(f) \subset \overline{B(x_0 , 2^{k_0} r)} \setminus B(x_0 , 2^{k_0 - 1} r)$ it holds
\begin{align*}
 \| \e^{- t A} f \|_{\L^2 (B(x_0 , r))} + t \| A \e^{- t A} f \|_{\L^2 (B(x_0 , r))} \leq C \bigg( 1 + \frac{2^{2 {k_0}} r^2}{t} \bigg)^{- \frac{\nu}{4}} \| f \|_{\L^2 (B(x_0 , 2^{k_0} r) \setminus B(x_0 , 2^{k_0 - 1} r))}.
\end{align*}
Moreover, for all $F \in \L^2 (\IR^d ; \IC^{d \times d})$ with $\supp(F) \subset \overline{B(x_0 , 2^{k_0} r)} \setminus B(x_0 , 2^{k_0 - 1} r)$ it holds
\begin{align*}
 t^{\frac{1}{2}} \| \e^{- t A} \IP \divergence(F) \|_{\L^2 (B(x_0 , r))} \leq C \bigg( 1 + \frac{2^{2 {k_0}} r^2}{t} \bigg)^{- \frac{\nu}{4}} \| F \|_{\L^2 (B(x_0 , 2^{k_0} r) \setminus B(x_0 , 2^{k_0 - 1} r))}.
\end{align*}
In both estimates, the constant $C$ only depends on $\mu_{\bullet}$, $\mu^{\bullet}$, $d$, and $\nu$.
\end{corollary}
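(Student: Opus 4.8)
\emph{Plan.} The plan is to derive the corollary from Theorem~\ref{Thm: Generalized Stokes semigroup} by applying it with the \emph{inflated} radius $\rho := \max\{2^{k_0-2} r,\sqrt{t}\}$ in place of $r$ (same $x_0$, same $t$), and then using the support hypothesis on $f$ (resp.\ $F$) to collapse the right-hand side. Throughout write $s := 2^{2k_0} r^2/t$, the quantity appearing on the right of the corollary. Three elementary properties of $\rho$ do everything. (i) Since $k_0 \geq 2$, $\rho \geq 2^{k_0-2} r \geq r$, so $\|g\|_{\L^2(B(x_0,r))} \leq \|g\|_{\L^2(B(x_0,\rho))}$ for every $g$ and it is enough to bound $\|\e^{-tA} f\|_{\L^2(B(x_0,\rho))}$, $t\|A\e^{-tA} f\|_{\L^2(B(x_0,\rho))}$ and $t^{1/2}\|\e^{-tA}\IP\divergence(F)\|_{\L^2(B(x_0,\rho))}$ by Theorem~\ref{Thm: Generalized Stokes semigroup}. (ii) For every $k \geq 2$ one has $2^k\rho \geq 2^{k+k_0-2} r \geq 2^{k_0} r$, hence $B(x_0,2^k\rho) \supseteq B(x_0,2^{k_0} r) \supseteq \supp(f)$ up to a Lebesgue-null set, so \emph{every} norm occurring in the series of Theorem~\ref{Thm: Generalized Stokes semigroup} (with $\rho$ in place of $r$) equals $\|f\|_{\L^2(B(x_0,2^{k_0} r)\setminus B(x_0,2^{k_0-1} r))}$, and likewise for $F$. (iii) From $\rho^2/t = \max\{2^{-4} s,1\} \geq 2^{-5}(1+s)$ we get $2^{2k}\rho^2/t \geq 2^{2k-5}(1+s)$ for $k \geq 2$, hence $\big(1+2^{2k}\rho^2/t\big)^{-\nu/4} \leq 2^{-(2k-5)\nu/4}(1+s)^{-\nu/4}$, and since $\nu>0$ the full series is bounded by $C_\nu (1+s)^{-\nu/4}$ with $C_\nu := \sum_{k\geq 2} 2^{-(2k-5)\nu/4} < \infty$.

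\emph{The leading term.} Besides the series, Theorem~\ref{Thm: Generalized Stokes semigroup} with radius $\rho$ also produces $C\|f\|_{\L^2(B(x_0,2\rho))}$ (and $C\|F\|_{\L^2(B(x_0,2\rho))}$), which I would handle by distinguishing which of the two arguments attains the maximum defining $\rho$. If $\rho = 2^{k_0-2} r$, then $2\rho = 2^{k_0-1} r$, the ball $B(x_0,2^{k_0-1} r)$ is disjoint from $\supp(f)$, and this term is $0$. If $\rho = \sqrt{t}$, then $\sqrt{t} > 2^{k_0-2} r$ forces $s < 16$, so $(1+s)^{-\nu/4} > 17^{-\nu/4}$, and the trivial bound $\|f\|_{\L^2(B(x_0,2\rho))} \leq \|f\|_{\L^2(\IR^d)} = \|f\|_{\L^2(B(x_0,2^{k_0} r)\setminus B(x_0,2^{k_0-1} r))}$ is, after multiplying by $1 < 17^{\nu/4}(1+s)^{-\nu/4}$, of the right form. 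In either case the leading term is $\leq 17^{\nu/4}(1+s)^{-\nu/4}\|f\|_{\L^2(B(x_0,2^{k_0} r)\setminus B(x_0,2^{k_0-1} r))}$. Combining this with (i)--(iii) gives the first asserted inequality with constant $(17^{\nu/4}+C_\nu)$ times that of Theorem~\ref{Thm: Generalized Stokes semigroup}, which depends only on $\mu_{\bullet},\mu^{\bullet},d,\nu$; the inequality for $\e^{-tA}\IP\divergence(F)$ is obtained verbatim from the second inequality of Theorem~\ref{Thm: Generalized Stokes semigroup}.

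\emph{Where the difficulty sits.} There is no deep obstacle, but the radius inflation is the actual point and is worth spelling out. Using Theorem~\ref{Thm: Generalized Stokes semigroup} with the radius $r$ itself would leave the tail $\sum_{k\geq k_0}\big(1+2^{2k} r^2/t\big)^{-\nu/4}$, a convergent geometric series comparable to $(1+s)^{-\nu/4}$ \emph{only} when $s\gtrsim 1$; when $t \gg 2^{2k_0} r^2$ it carries of order $\log(1/s)$ terms of size $\approx 1$ and therefore fails to reproduce the claimed factor (which is itself $\approx 1$ in that range). Inflating the radius to $\rho \gtrsim \sqrt{t}$ pushes the first term of the series past the parabolic scale, so that $2^{2k}\rho^2/t \geq 2^{2k} \geq 16$ already at $k=2$ and the series is genuinely geometric from its first term for \emph{all} $t>0$; that is the whole mechanism. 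One could alternatively keep the radius $r$ and, for $t > 2^{2k_0} r^2$, replace Theorem~\ref{Thm: Generalized Stokes semigroup} by the uniform smoothing bounds $\|\e^{-tA}\|_{\Lop(\L^2_{\sigma})} \leq 1$, $\|tA\e^{-tA}\|_{\Lop(\L^2_{\sigma})} \lesssim 1$ and $t^{1/2}\|\e^{-tA}\IP\divergence\|_{\Lop(\L^2(\IR^d;\IC^{d\times d}),\L^2_{\sigma})} \lesssim 1$ valid by Assumption~\ref{Ass: Coefficients} (under which $-A$ generates a bounded analytic contraction semigroup on $\L^2_{\sigma}(\IR^d)$), exploiting once more that $(1+s)^{-\nu/4}$ is bounded below there.
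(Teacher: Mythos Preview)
Your argument is correct. It differs from the paper's proof in organization, though not in depth: the paper splits directly on the size of $s=2^{2k_0}r^2/t$. When $s<1$ it invokes the global $\L^2$-smoothing bounds \eqref{Eq: Semigroup estimate 1}--\eqref{Eq: Semigroup estimate 2} (exactly the alternative you mention in your last paragraph), and when $s\geq 1$ it applies Theorem~\ref{Thm: Generalized Stokes semigroup} with the \emph{original} radius $r$, letting the support hypothesis kill all terms with $k<k_0$ and summing the remaining geometric tail $\sum_{k\geq k_0}(2^{2k}r^2/t)^{-\nu/4}$. Your route---inflating the radius to $\rho=\max\{2^{k_0-2}r,\sqrt{t}\}$ and applying Theorem~\ref{Thm: Generalized Stokes semigroup} once---avoids the explicit case split on $s$ for the series, but the split reappears when you treat the leading term $\|f\|_{\L^2(B(x_0,2\rho))}$. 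The two arguments are of comparable length and neither is more general; the paper's is marginally more direct, while yours makes transparent why the naive use of radius $r$ fails for small $s$ (the $\log(1/s)$ pile-up you identify) and how pushing the radius past the parabolic scale cures it.
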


\section{A non-local resolvent estimate}

\noindent To establish Theorem~\ref{Thm: Generalized Stokes semigroup} we prove analogous estimates for the resolvent of $A$. More precisely, we are going to estimate the solution $u$ to the generalized Stokes resolvent problem
\begin{align}
\label{Eq: Resolvent problem}
 \left\{ \begin{aligned}
  \lambda u - \divergence(\mu \nabla u) + \nabla \phi &= f + \IP \divergence(F) && \text{in } \IR^d, \\
  \divergence(u) &= 0 && \text{in } \IR^d
 \end{aligned} \right.
\end{align}
for $\lambda$ in some complex sector $\S_{\omega} := \{ z \in \IC \setminus \{ 0 \} : \lvert \arg(z) \rvert < \omega \}$. Using Assumption~\ref{Ass: Coefficients} together with the lemma of Lax--Milgram, one finds some $\omega \in (\pi / 2 , \pi)$ depending on $\mu_{\bullet}$, $\mu^{\bullet}$, and $d$ such that~\eqref{Eq: Resolvent problem} is uniquely solvable for all $f \in \L^2_{\sigma} (\IR^d)$ and all $F \in \L^2 (\IR^d ; \IC^{d \times d})$. In the follwing, let us denote the solution operator to~\eqref{Eq: Resolvent problem} by $(\lambda + A)^{-1}$. The solution $u$ to~\eqref{Eq: Resolvent problem} then lies in the space $\H^1_{\sigma} (\IR^d)$ and for all $\theta \in (0 , \omega)$ there exists $C > 0$ such that for all $f \in \L^2_{\sigma} (\IR^d)$, $F \in \L^2 (\IR^d ; \IC^{d \times d})$, and all $\lambda \in \S_{\theta}$ it satisfies the resolvent estimates
\begin{align}
\label{Eq: Resolvent estimate}
 \| \lambda (\lambda + A)^{-1} f \|_{\L^2} + \lvert \lambda \rvert^{\frac{1}{2}} \| \nabla (\lambda + A)^{-1} f \|_{\L^2} + \| A (\lambda + A)^{-1} f \|_{\L^2} \leq C \| f \|_{\L^2}
\end{align}
and
\begin{align}
\label{Eq: Resolvent estimate 2}
 \lvert \lambda \rvert^{\frac{1}{2}} \| (\lambda + A)^{-1} \IP \divergence(F) \|_{\L^2} + \| \nabla (\lambda + A)^{-1} \IP \divergence(F) \|_{\L^2} \leq C \| F \|_{\L^2}.
\end{align}
The next lemma was proven in~\cite[Lem.~5.3]{Tolksdorf} and combines different types of Caccioppoli inequalities to account for the non-local pressure.

\begin{lemma}
\label{Lem: Preparation of reverse Holder}
Let $\mu$ satisfy Assumption~\ref{Ass: Coefficients} with constants $\mu^{\bullet} , \mu_{\bullet} > 0$. There exists $\omega \in (\pi / 2 , \pi)$ such that for all $\theta \in (0 , \omega)$, $f \in \L^2_{\sigma} (\IR^d)$, $F \in \L^2 (\IR^d ; \IC^{d \times d})$, and $\lambda \in \S_{\theta}$ the following holds: for $u \in \H^1_{\sigma} (\IR^d)$ defined by $u := (\lambda + A)^{-1} (f + \IP \divergence(F))$ and $x_0 \in \IR^d$ and $r_0 > 0$ there exists a decomposition of $u$ of the form $u = u_1 + u_2$ with $u_1 \in \H^1 (B(x_0 , r_0) ; \IC^d)$ satisfying $\divergence(u_1) = 0$ and $u_2 \equiv u$ in $\IR^d \setminus B(x_0 , r_0)$ and there exists $\phi_1 \in \L^2 (B(x_0 , r_0))$ and $C > 0$ such that for any ball $B \subset \IR^d$ of radius $r > 0$ with $2B \subset B(x_0 , r_0)$ we have
\begin{align}
\label{Eq: Second estimate}
\begin{aligned}
 &\lvert \lambda \rvert^3 r^2 \int_{B} \lvert u_2 \rvert^2 \, \d x + \lvert \lambda \rvert^2 r^2 \int_{B} \lvert \nabla u_2 \rvert^2 \; \d x \\
  &\quad \leq C \bigg\{ \sum_{\ell = 0}^{\infty} 2^{- \ell d - \ell} \int_{2^{\ell} B} \big(\lvert \lambda u \rvert^2 + \lvert f \rvert^2 + \lvert \lvert \lambda \rvert^{\frac{1}{2}} F \rvert^2\big) \, \d x + \int_{2 B} \lvert \lambda u_1 \rvert^2 \, \d x + \int_{2 B} \lvert \lvert \lambda \rvert^{\frac{1}{2}} \phi_1 \rvert^2 \, \d x \bigg\}\cdotp
\end{aligned}
\end{align}
Moreover, $u_1$ and $\phi_1$ satisfy for some $C > 0$
\begin{align}
\label{Eq: First estimate u}
\begin{aligned}
 \lvert \lambda \rvert \| u_1 \|_{\L^2 (B(x_0 , r_0))} + \lvert \lambda \rvert^{\frac{1}{2}} \| \nabla u_1 \|_{\L^2 (B (x_0 , r_0))} &+ \lvert \lambda \rvert^{\frac{1}{2}} \| \phi_1 \|_{\L^2 (B(x_0 , r_0))} \\
 &\quad\leq C \big( \| f \|_{\L^2 (B (x_0 , r_0))} + \lvert \lambda \rvert^{\frac{1}{2}} \| F \|_{\L^2 (B(x_0 , r_0))} \big).
\end{aligned}
\end{align}
In both inequalities, the constant $C$ only depends on $d$, $\theta$, $\mu_{\bullet}$, and $\mu^{\bullet}$. Moreover, $\omega$ only depends on $d$, $\mu_{\bullet}$, and $\mu^{\bullet}$.
\end{lemma}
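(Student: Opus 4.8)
The plan is to split $u$ into the part that solves the \emph{local} Dirichlet generalized Stokes resolvent problem on $B_0 := B(x_0 , r_0)$ driven by the localized data, plus a remainder. This is what tames the non-local pressure: the local part $u_1$ absorbs the data $f , F$ on $B_0$ together with a ``local pressure'' $\phi_1$, while the remainder $u_2 := u - u_1$ (with $u_1$ extended by zero) coincides with $u$ outside $B_0$, is divergence-free, and solves a \emph{homogeneous} generalized Stokes resolvent problem on $B_0$ --- a situation for which interior Caccioppoli estimates are accessible.

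First I would construct $u_1$ and $\phi_1$. The G\r{a}rding inequality~\eqref{Eq: Ellipticity} restricts, by extension by zero, to the divergence-free subspace of $\H^1_0 (B_0 ; \IC^d)$, so the same Lax--Milgram and sectoriality argument that yields~\eqref{Eq: Resolvent estimate}--\eqref{Eq: Resolvent estimate 2}, now on $B_0$, produces a unique such $u_1$ with $\lambda (u_1 , v)_{B_0} + \fa (u_1 , v) = (f , v)_{B_0} - (F , \nabla v)_{B_0}$ for all divergence-free $v \in \H^1_0 (B_0 ; \IC^d)$, and --- testing with $v = u_1$ --- the bound $\lvert \lambda \rvert \| u_1 \|_{\L^2 (B_0)} + \lvert \lambda \rvert^{\frac{1}{2}} \| \nabla u_1 \|_{\L^2 (B_0)} \leq C ( \| f \|_{\L^2 (B_0)} + \lvert \lambda \rvert^{\frac{1}{2}} \| F \|_{\L^2 (B_0)} )$. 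Since the functional $v \mapsto (f , v)_{B_0} - (F , \nabla v)_{B_0} - \lambda (u_1 , v)_{B_0} - \fa (u_1 , v)$ annihilates divergence-free test functions, de Rham's lemma --- equivalently the surjectivity of $\divergence \colon \H^1_0 (B_0 ; \IC^d) \to \{ q \in \L^2 (B_0) : \int_{B_0} q \, \d x = 0 \}$ via the Bogovskii operator on the ball --- yields $\phi_1$; testing the equation against a Bogovskii preimage of $\phi_1$ and invoking the estimate just obtained gives $\lvert \lambda \rvert^{\frac{1}{2}} \| \phi_1 \|_{\L^2 (B_0)} \leq C ( \| f \|_{\L^2 (B_0)} + \lvert \lambda \rvert^{\frac{1}{2}} \| F \|_{\L^2 (B_0)} )$, which is~\eqref{Eq: First estimate u}. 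Subtracting the local weak formulation from the global one then shows that $u_2 = u - u_1$ satisfies $\lambda (u_2 , v)_{B_0} + \fa (u_2 , v) = 0$ for all divergence-free $v \in \H^1_0 (B_0 ; \IC^d)$, with associated (still non-local) pressure $\phi_2 = \phi - \phi_1$.

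Next I would prove~\eqref{Eq: Second estimate} by a Caccioppoli argument for $u_2$. For a ball $B$ of radius $r$ with $2B \subset B_0$, take a cutoff $\eta$ supported in $2B$ with $\eta \equiv 1$ on $B$ and $\lvert \nabla \eta \rvert \leq C r^{-1}$, and test the equation for $u_2$ against $\eta^2 u_2$. This test function is \emph{not} divergence-free, so the pressure does not drop out --- but, since $\nabla (\eta^2) \cdot u_2$ has vanishing integral, the pressure term equals $\int_{2B} (\phi_2 - c) \, \overline{\nabla (\eta^2) \cdot u_2} \, \d x$ for every constant $c$, hence is $\leq C r^{-1} \| \phi_2 - c \|_{\L^2 (2B)} \| u_2 \|_{\L^2 (2B)}$. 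Using the G\r{a}rding inequality on $\eta u_2$, Young's and Poincar\'e's inequalities and a hole-filling iteration, and absorbing the $\lvert \lambda \rvert \| u_2 \|^2$-part of the pressure cross term into the left-hand side, one is led to
\begin{align*}
 \lvert \lambda \rvert \int_B \lvert u_2 \rvert^2 \, \d x + \int_B \lvert \nabla u_2 \rvert^2 \, \d x \leq \frac{C}{r^2} \int_{2B} \lvert u_2 \rvert^2 \, \d x + \frac{C}{\lvert \lambda \rvert r^2} \inf_{c \in \IC} \int_{2B} \lvert \phi_2 - c \rvert^2 \, \d x.
\end{align*}
Multiplying by $\lvert \lambda \rvert^2 r^2$ reproduces the left-hand side of~\eqref{Eq: Second estimate} up to the two terms $\int_{2B} \lvert \lambda u_2 \rvert^2 \, \d x$ and $\lvert \lambda \rvert \inf_c \int_{2B} \lvert \phi_2 - c \rvert^2 \, \d x$. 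The first is $\leq 2 \int_{2B} \lvert \lambda u \rvert^2 \, \d x + 2 \int_{2B} \lvert \lambda u_1 \rvert^2 \, \d x$, and both of these are dominated by the right-hand side of~\eqref{Eq: Second estimate}. For the second, write $\phi_2 = \phi - \phi_1$: the $\phi_1$-contribution is the term $\int_{2B} \lvert \lvert \lambda \rvert^{\frac{1}{2}} \phi_1 \rvert^2 \, \d x$, while the genuinely non-local $\phi$ is handled by a potential estimate. From $\Delta \phi = \divergence \divergence (\mu \nabla u)$ --- the only non-divergence-free term in~\eqref{Eq: Resolvent problem} --- one sees that, modulo a constant, $\phi$ is a fixed finite linear combination of double Riesz transforms of the functions $\mu^{i j}_{\alpha \beta} \partial_{\beta} u_j$; decomposing each such function into its restriction to $C B$ --- bounded directly, by $\L^2$-boundedness of the Riesz transforms, by $C \mu^{\bullet} \| \nabla u \|_{\L^2 (CB)}$ --- and its restrictions to the dyadic shells $2^{\ell} B \setminus 2^{\ell - 1} B$, the smoothness and decay of the Calder\'on--Zygmund kernel on $\{ (x , y) : x \in 2B, \; \lvert x - y \rvert \gtrsim 2^{\ell} r \}$ turn the far contributions to $\inf_c \| \phi - c \|_{\L^2 (2B)}$ into a geometrically decaying sum; a (nested) interior Caccioppoli iteration on the shells, using the equation to trade $\mu \nabla u$ for $\lambda u$, $f$ and $\lvert \lambda \rvert^{\frac{1}{2}} F$, finally turns this into $\sum_{\ell \geq 0} 2^{-\ell d - \ell} \int_{2^{\ell} B} ( \lvert \lambda u \rvert^2 + \lvert f \rvert^2 + \lvert \lvert \lambda \rvert^{\frac{1}{2}} F \rvert^2 ) \, \d x$. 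This establishes~\eqref{Eq: Second estimate}.

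The only genuine obstacle is the non-locality of the Stokes pressure: it is what forces the $u_1/u_2$ decomposition in the first place --- a Bogovskii-corrected divergence-free test function, which \emph{would} eliminate the pressure, instead introduces a term of size $\lvert \lambda \rvert \| u_2 \|^2$ that destroys the $\lvert \lambda \rvert^{-1} r^{-2}$ gain on which~\eqref{Eq: Second estimate} rests, so one is obliged to keep the pressure and estimate it, and the potential bound for the non-local part of $\phi$ is the technical core. Everything else is bookkeeping, namely keeping every term scale-consistent in $\lvert \lambda \rvert$ and $r$ through the hole-filling, the pressure estimate and the shell-by-shell Caccioppoli so that the resulting $\ell$-sum converges; the complete argument is~\cite[Lem.~5.3]{Tolksdorf}.
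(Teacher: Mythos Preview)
The paper does not prove this lemma; it is quoted from~\cite[Lem.~5.3]{Tolksdorf} with only the remark that it ``combines different types of Caccioppoli inequalities to account for the non-local pressure.'' There is therefore no in-paper argument to compare against, and you yourself defer to the same reference for the complete proof.

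Your outline --- solve an auxiliary local Stokes resolvent problem on $B_0$ to produce $(u_1,\phi_1)$ and obtain~\eqref{Eq: First estimate u}, subtract to get a homogeneous interior problem for $u_2$, run a Caccioppoli argument with test function $\eta^2 u_2$, and control the surviving pressure oscillation $\inf_c \|\phi - c\|_{\L^2(2B)}$ by a dyadic Calder\'on--Zygmund estimate through $\Delta\phi = \divergence\divergence(\mu\nabla u)$ --- is the shape of the argument and matches the tools assembled elsewhere in the paper (Lemmas~\ref{Lem: Non-local pressure estimate} and~\ref{Lem: Caccioppoli}). One cosmetic difference: you take $u_1$ to solve the \emph{Dirichlet} problem on $B_0$, whereas the parallel construction used later in this paper (equations~\eqref{Eq: Solenoidal distributional Neumann}--\eqref{Eq: Resolvent estimate Neumann} and the proof of Theorem~\ref{Thm: Off-diagonal for resolvent}) is based on the \emph{Neumann} form $\fb_B$. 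Either choice yields~\eqref{Eq: First estimate u} and leaves $u_2$ solving a homogeneous interior system, so this does not affect the argument; your Dirichlet variant has the minor advantage that the G\r{a}rding inequality~\eqref{Eq: Ellipticity} passes to $\H^1_0(B_0)$ immediately by zero-extension.
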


This lemma can be used to prove the following non-local resolvent estimate.

\begin{theorem}
\label{Thm: Nonlocal resolvent estimate}
Let $\mu$ satisfy Assumption~\ref{Ass: Coefficients} with constants $\mu^{\bullet} , \mu_{\bullet} > 0$. There exists $\omega \in (\pi / 2 , \pi)$ such that for all $\theta \in (0 , \omega)$ and all $\nu \in (0 , 2)$ there exists a constant $C > 0$ such that for all $\lambda \in \S_{\theta}$, $f \in \L^2_{\sigma} (\IR^d)$, and $F \in \L^2 (\IR^d ; \IC^{d \times d})$ the unique solution $u \in \H^1_{\sigma} (\IR^d)$ to~\eqref{Eq: Resolvent problem} satisfies
\begin{align*}
 \sum_{k = 0}^{\infty} 2^{- \nu k} \int_{B(x_0 , 2^k r)} \lvert \lambda u \rvert^2 \, \d x \leq C \sum_{k = 0}^{\infty} 2^{- \nu k} \int_{B(x_0 , 2^k r)} \big( \lvert f \rvert^2 + \lvert \lvert \lambda \rvert^{\frac{1}{2}} \rvert F \rvert^2 \big) \, \d x.
\end{align*}
Here, the constant $C$ only depends on $d$, $\theta$, $\nu$, $\mu_{\bullet}$, and $\mu^{\bullet}$ and $\omega$ only depends on $d$, $\mu_{\bullet}$, and $\mu^{\bullet}$.
\end{theorem}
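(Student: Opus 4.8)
The plan is to reduce, by translation, to $x_0 = 0$ and to work with the concentric masses $p_k := \int_{B(0 , 2^k r)} \lvert \lambda u \rvert^2 \, \d x$, which are non-decreasing in $k$, uniformly bounded by $\| \lambda u \|_{\L^2}^2$, and satisfy $(2^k r)^{-d} p_k \to 0$ as $k \to \infty$ — all of this a consequence of the global estimates~\eqref{Eq: Resolvent estimate}--\eqref{Eq: Resolvent estimate 2}. Put $a := \lvert \lambda \rvert r^2$ and let $k_\sigma$ be the smallest integer with $a \, 4^{k_\sigma} \geq 4 C$, where $C$ is the constant of~\eqref{Eq: Second estimate}, so that $2^{k_\sigma} r$ sits at a fixed multiple of the natural scale $\lvert \lambda \rvert^{-1/2}$. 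Writing $\Psi := \sum_{k \geq 0} 2^{- \nu k} \int_{B(0 , 2^k r)} ( \lvert f \rvert^2 + \lvert \lambda \rvert \lvert F \rvert^2 ) \, \d x$ for the right-hand side of the asserted inequality, it suffices to bound $\sum_{k \geq k_\sigma} 2^{- \nu k} p_k$ and $\sum_{0 \leq k < k_\sigma} 2^{- \nu k} p_k$ separately by $C \Psi$.

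For the supercritical range I would first extract a self-improving pointwise recursion. Fix $k \geq k_\sigma$, apply Lemma~\ref{Lem: Preparation of reverse Holder} with $r_0 = 2^{k+3} r$ to get $u = u_1 + u_2$ and $\phi_1$ on $B(0 , 2^{k+3} r)$, and apply~\eqref{Eq: Second estimate} to $B = B(0 , 2^k r)$; its left-hand side is $a \, 4^k \int_{B(0 , 2^k r)} \lvert \lambda u_2 \rvert^2 \, \d x$. Using $\lvert \lambda u \rvert^2 \leq 2 \lvert \lambda u_1 \rvert^2 + 2 \lvert \lambda u_2 \rvert^2$ together with~\eqref{Eq: First estimate u} to pass between $p_k$ and $\int_{B(0 , 2^k r)} \lvert \lambda u_2 \rvert^2 \, \d x$ up to the \emph{local} datum $\int_{B(0 , 2^{k+3} r)} ( \lvert f \rvert^2 + \lvert \lambda \rvert \lvert F \rvert^2 ) \, \d x$, and then absorbing the $\ell = 0$ self-term, which carries the coefficient $C/(a \, 4^k) \leq 1/4$, into the left-hand side, one reaches an inequality of the form
\[
 p_k \leq C \sum_{\ell \geq 0} 2^{- \ell (d + 1)} \int_{B(0 , 2^{k + \ell} r)} \big( \lvert f \rvert^2 + \lvert \lambda \rvert \lvert F \rvert^2 \big) \, \d x + 4^{-(k - k_\sigma)} \sum_{\ell \geq 1} 2^{- \ell (d + 1)} p_{k + \ell} , \qquad k \geq k_\sigma ,
\]
whose first term I call $D_k$. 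The decisive move is then to iterate this recursion pointwise: unfolding it $N$ times, a chain of indices $k = m_0 < m_1 < \dots < m_N$ accrues the coefficient $\prod_j 4^{-(m_j - k_\sigma)} 2^{-(m_{j+1} - m_j)(d+1)}$, and since $m_j \geq k_\sigma + j$ the product of the $4$-factors is at most $4^{-N(N-1)/2}$, a super-geometric decay that overcomes the polynomial count of chains \emph{irrespective of the size of $C$}. Hence the $p$-terms disappear in the limit (here their uniform boundedness is used) and one gets a clean bound $p_k \leq C_\epsilon \sum_{n \geq k} 2^{-(d + 1 - \epsilon)(n - k)} D_n$ for any small $\epsilon > 0$. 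Multiplying by $2^{- \nu k}$, summing over $k \geq k_\sigma$, reindexing the double sums by $n = k + \ell$, and using $0 < \nu < 2 \leq d$ — which makes the exponents $d + 1 - \epsilon - \nu$ in the ensuing geometric series positive — yields $\sum_{k \geq k_\sigma} 2^{- \nu k} p_k \leq C \sum_{n \geq k_\sigma} 2^{- \nu n} D_n \leq C \Psi$.

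For the subcritical range, monotonicity gives only $\sum_{0 \leq k < k_\sigma} 2^{- \nu k} p_k \leq (1 - 2^{- \nu})^{-1} p_{k_\sigma}$, and $p_{k_\sigma}$ by itself is controlled merely by $2^{\nu k_\sigma} \Psi$, which is too lossy. Instead one uses that below the natural scale $\lambda u$ is lower order (as $\lvert \lambda \rvert (2^k r)^2 < 4 C$ there): applying Lemma~\ref{Lem: Preparation of reverse Holder} on the balls $B(0 , 2^k r)$ with $k < k_\sigma$ and running a hole-filling / Morrey--Campanato iteration on the gradient term $\lvert \lambda \rvert^2 (2^k r)^2 \int \lvert \nabla u_2 \rvert^2 \, \d x$ of~\eqref{Eq: Second estimate} — the pressure being inert precisely because it has been moved into $u_1$ and $\phi_1$ — produces a decay estimate
\[
 p_k \leq C \, 2^{\gamma (k - k_\sigma)} p_{k_\sigma} + C \sum_{\ell \geq 0} 2^{- \ell (d+1)} \int_{B(0 , 2^{k + \ell} r)} \big( \lvert f \rvert^2 + \lvert \lambda \rvert \lvert F \rvert^2 \big) \, \d x , \qquad 0 \leq k < k_\sigma ,
\]
with a decay exponent $\gamma \geq 2$, hence $\gamma > \nu$. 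Summing this against $2^{- \nu k}$ over $k < k_\sigma$ gives $\sum_{0 \leq k < k_\sigma} 2^{- \nu k} p_k \leq C \, 2^{- \nu k_\sigma} p_{k_\sigma} + C \Psi \leq C \sum_{k \geq k_\sigma} 2^{- \nu k} p_k + C \Psi \leq C \Psi$, the last step by the supercritical bound. Adding the two ranges and undoing the translation completes the proof.

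The main obstacle is the absorption in the supercritical step: the constant in~\eqref{Eq: Second estimate} is of Caccioppoli type and not small, so the $\ell \geq 1$ self-reference cannot be discarded by a naive geometric argument, and the genuine gain has to be pulled out of the forced growth of the scale index along the iteration (the factor $\prod_j 4^{-(m_j - k_\sigma)}$); extracting it cleanly forces one to bookkeep all scales at once — the $r_0$-dependence of the decomposition $u = u_1 + u_2$, the tails of the $\ell$-sums whose dilated balls leave $B(0 , r_0)$, and the transition region $2^k r \approx \lvert \lambda \rvert^{-1/2}$ where neither mechanism is comfortable. The subcritical Morrey--Campanato decay, and verifying that it survives the Stokes pressure, is the second, milder, point, and is exactly the task Lemma~\ref{Lem: Preparation of reverse Holder} was built for.
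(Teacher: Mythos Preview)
Your route is genuinely different from the paper's, and the subcritical half of your plan has a real gap.

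The paper does \emph{not} split at the natural scale $|\lambda|^{-1/2}$. Instead it fixes a large integer $\ell_0$, applies Lemma~\ref{Lem: Preparation of reverse Holder} at radius $r_0=2^{k+\ell_0+1}r$, and uses H\"older together with the Sobolev embedding $\H^1(B)\hookrightarrow \L^{2q}(B)$ (with $\tfrac12-\tfrac1{2q}\leq\tfrac1d$) to obtain
\[
\int_{B(x_0,2^k r)}|u_{2,k}|^2\,\d x\leq C\,2^{-\ell_0 d(1-\frac1q)}\bigg\{\int_{B(x_0,2^{k+\ell_0}r)}|u_{2,k}|^2\,\d x+(2^{k+\ell_0}r)^2\int_{B(x_0,2^{k+\ell_0}r)}|\nabla u_{2,k}|^2\,\d x\bigg\}.
\]
The smallness factor $2^{-\ell_0 d(1-1/q)}$ is purely geometric and \emph{independent of $\lambda$}. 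Feeding~\eqref{Eq: Second estimate} and~\eqref{Eq: First estimate u} into this and summing against $2^{-\nu k}$ produces a self-referencing inequality with coefficient $C\,2^{-\ell_0(d(1-1/q)-\nu)}$; choosing $q$ with $1-\tfrac2d\leq\tfrac1q<1-\tfrac\nu d$ (possible precisely because $\nu<2$) and then $\ell_0$ large allows a single absorption. No case distinction, no iteration.

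Your supercritical recursion is essentially sound (though what you call ``absorbing the $\ell=0$ self-term'' is a misnomer: the right-hand side of~\eqref{Eq: Second estimate} applied with $B=B(0,2^kr)$ produces $p_{k+1},p_{k+2},\dots$, not $p_k$). The problem is the subcritical claim. You assert that a hole-filling/Morrey--Campanato iteration on the gradient term of~\eqref{Eq: Second estimate} yields $p_k\leq C\,2^{\gamma(k-k_\sigma)}p_{k_\sigma}+\text{data}$ with $\gamma\geq 2$. For systems with merely bounded measurable coefficients---exactly the setting of Assumption~\ref{Ass: Coefficients}---hole-filling delivers only a small exponent $\gamma_0>0$ depending on $\mu^\bullet/\mu_\bullet$, generically far below $2$; solutions need not be locally bounded, so there is no a priori reason for the $\L^2$ mass of $\lambda u$ on $B(0,\rho)$ to decay like $\rho^2$. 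Since you need $\gamma>\nu$ for \emph{every} $\nu<2$, the argument as written fails.

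What actually furnishes decay with exponent arbitrarily close to $2$ is precisely the Sobolev step above: the combination of higher integrability of $u_{2,k}$ with the gradient bound from~\eqref{Eq: Second estimate}. Once one notices that this gain is uniform in $\lambda$, the split at $k_\sigma$ becomes unnecessary, your pointwise iteration is replaced by a single $\ell_0$-absorption, and one arrives at the paper's proof.
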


\begin{proof}
We use the decomposition of $u$ from Lemma~\ref{Lem: Preparation of reverse Holder} as follows. Fix $k \in \IN_0$ and let $\ell_0 \in \IN$ to be determined. Let $u_{1 , k}$, $u_{2 , k}$, and $\phi_{1 , k}$ be the functions determined by Lemma~\ref{Lem: Preparation of reverse Holder} with $r_0 := 2^{k + \ell_0 + 1} r$. Now, we proceed by applying H\"older's inequality, then increase the domain of integration, and use Sobolev's embedding to obtain for $q > 1$ with
\begin{align}
\label{Eq: Condition q}
 \frac{1}{2} - \frac{1}{2 q} \leq \frac{1}{d}
\end{align}
the inequalities
\begin{align*}
 &\int_{B(x_0 , 2^k r)} \lvert u_{2 , k} \rvert^2 \, \d x \leq \lvert B(x_0 , 2^k r) \rvert^{1 - \frac{1}{q}} \bigg( \int_{B(x_0 , 2^k r)} \lvert u_{2 , k} \rvert^{2 q} \, \d x \bigg)^{\frac{1}{q}} \\
 &\qquad\leq \frac{\lvert B(x_0 , 2^k r) \rvert^{1 - \frac{1}{q}}}{\lvert B(x_0 , 2^{ k + \ell_0} r) \rvert^{- \frac{1}{q}}} \bigg( \fint_{B(x_0 , 2^{k + \ell_0} r)} \lvert u_{2 , k} \rvert^{2 q} \, \d x \bigg)^{\frac{1}{q}} \\
 &\qquad\leq C \frac{\lvert B(x_0 , 2^k r) \rvert^{1 - \frac{1}{q}}}{\lvert B(x_0 , 2^{ k + \ell_0} r) \rvert^{1 - \frac{1}{q}}}\bigg\{ \int_{B(x_0 , 2^{k + \ell_0} r)} \lvert u_{2 , k} \rvert^2 \, \d x + (2^{k + \ell_0} r)^2 \int_{B(x_0 , 2^{k + \ell_0} r)} \lvert \nabla u_{2 , k} \rvert^2 \, \d x \bigg\} \\
 &\qquad= C 2^{- \ell_0 d (1 - \frac{1}{q})} \bigg\{ \int_{B(x_0 , 2^{k + \ell_0} r)} \lvert u_{2 , k} \rvert^2 \, \d x + (2^{k + \ell_0} r)^2 \int_{B(x_0 , 2^{k + \ell_0} r)} \lvert \nabla u_{2 , k} \rvert^2 \, \d x \bigg\}\cdotp
\end{align*}
Notice that the constant $C > 0$ in the previous estimate only depends on $d$ and $q$. Now, use this estimate together with $u_{2 , k} = u - u_{1 , k}$ and~\eqref{Eq: Second estimate} and~\eqref{Eq: First estimate u} to deduce
\begin{align*}
 &\int_{B(x_0 , 2^k r)} \lvert \lambda u \rvert^2 \, \d x \\
 &\qquad\leq 2 \int_{B(x_0 , 2^k r)} \lvert \lambda u_{1 , k} \rvert^2 \, \d x + 2 \int_{B(x_0 , 2^k r)} \lvert \lambda u_{2 , k} \rvert^2 \, \d x \\
 &\qquad\leq 2 \int_{B(x_0 , 2^k r)} \lvert \lambda u_{1 , k} \rvert^2 \, \d x \\
 &\qquad\qquad + \lvert \lambda \rvert^2 C 2^{- \ell_0 d (1 - \frac{1}{q})} \bigg\{ \int_{B(x_0 , 2^{k + \ell_0} r)} \lvert u_{2 , k} \rvert^2 \, \d x + (2^{k + \ell_0} r)^2 \int_{B(x_0 , 2^{k + \ell_0} r)} \lvert \nabla u_{2 , k} \rvert^2 \, \d x \bigg\} \\
 &\qquad\leq 2 \int_{B(x_0 , 2^k r)} \lvert \lambda u_{1 , k} \rvert^2 \, \d x \\
 &\qquad\qquad + \lvert \lambda \rvert^2 C 2^{- \ell_0 d (1 - \frac{1}{q})} \int_{B(x_0 , 2^{k + \ell_0} r)} \lvert u - u_{1 , k} \rvert^2 \, \d x \\
 &\qquad\qquad + C 2^{- \ell_0 d (1 - \frac{1}{q})} \bigg\{ \sum_{\ell = 0}^{\infty} 2^{- \ell d - \ell} \int_{B(x_0 , 2^{ k + \ell + \ell_0} r)} \big(\lvert \lambda u \rvert^2 + \lvert f \rvert^2 + \lvert \lvert \lambda \rvert^{\frac{1}{2}} F \rvert^2\big) \, \d x \\
 &\qquad\qquad + \int_{B(x_0 , 2^{k + \ell_0 + 1} r)} \lvert \lambda u_{1 , k} \rvert^2 \, \d x + \int_{B (x_0 , 2^{k + \ell_0 + 1} r)} \lvert \lvert \lambda \rvert^{\frac{1}{2}} \phi_{1 , k} \rvert^2 \, \d x \bigg\} \\
  &\qquad\leq C \int_{B(x_0 , 2^{k + \ell_0 + 1} r)} \lvert f \rvert^2 \, \d x + C 2^{- \ell_0 d (1 - \frac{1}{q})} \int_{B(x_0 , 2^{k + \ell_0} r)} \lvert \lambda u \rvert^2 \, \d x \\
 &\qquad\qquad + C 2^{- \ell_0 d (1 - \frac{1}{q})} \sum_{\ell = 0}^{\infty} 2^{- \ell d - \ell} \int_{B(x_0 , 2^{ k + \ell + \ell_0} r)} \big(\lvert \lambda u \rvert^2 + \lvert f \rvert^2 + \lvert \lvert \lambda \rvert^{\frac{1}{2}} F \rvert^2\big) \, \d x.
\end{align*}
Now, multiply this inequality by $2^{- \nu k}$ and sum with respect to $k \in \IN_0$. This then delivers
\begin{align*}
 &\sum_{k = 0}^{\infty} 2^{- \nu k} \int_{B(x_0 , 2^k r)} \lvert \lambda u \rvert^2 \, \d x \\
 &\qquad\leq C 2^{- \ell_0 (d - \frac{d}{q} - \nu)} \sum_{k = 0}^{\infty} 2^{- \nu (k + \ell_0)} \int_{B(x_0 , 2^{k + \ell_0} r)} \lvert \lambda u \rvert^2 \, \d x \\
 &\qquad\qquad + C 2^{- \ell_0 (d - \frac{d}{q} - \nu)} \sum_{\ell = 0}^{\infty} 2^{\ell (\nu - d - 1)} \sum_{k = 0}^{\infty} 2^{- \nu (k + \ell + \ell_0)} \int_{B(x_0 , 2^{k + \ell + \ell_0} r)} \lvert \lambda u \rvert^2 \, \d x \\
 &\qquad\qquad + C 2^{\nu (\ell_0 + 1)} \sum_{k = 0}^{\infty} 2^{- \nu (k + \ell_0 + 1)} \int_{B(x_0 , 2^{k + \ell_0 + 1} r)} \lvert f \rvert^2 \, \d x \\
 &\qquad\qquad + C 2^{- \ell_0 (d - \frac{d}{q} - \nu)} \sum_{\ell = 0}^{\infty} 2^{\ell (\nu - d - 1)} \sum_{k = 0}^{\infty} 2^{- \nu (k + \ell + \ell_0)} \int_{B(x_0 , 2^{k + \ell + \ell_0} r)} \big( \lvert f \rvert^2 + \lvert \lvert \lambda \rvert^{\frac{1}{2}} F \rvert^2 \big) \, \d x \allowdisplaybreaks \\
 &\qquad\leq C 2^{- \ell_0 (d - \frac{d}{q} - \nu)} \sum_{k = 0}^{\infty} 2^{- \nu k} \int_{B(x_0 , 2^k r)} \lvert \lambda u \rvert^2 \, \d x \\
 &\qquad\qquad + C \sum_{k = 0}^{\infty} 2^{- \nu k} \int_{B(x_0 , 2^k r)} \big( \lvert f \rvert^2 + \lvert \lvert \lambda \rvert^{\frac{1}{2}} F \rvert^2 \big) \, \d x.
\end{align*}
Now, in order to conclude that the exponent $d - \frac{d}{q} - \nu$ is positive, we need to require further restrictions to $q$. One immediately verifies that the positivity of this exponent as well as~\eqref{Eq: Condition q} are fulfilled, whenever $q$ satisfies
\begin{align}
\label{Eq: Choice q}
 1 - \frac{2}{d} \leq \frac{1}{q} < 1 - \frac{\nu}{d}\cdotp
\end{align}
Since $\nu < 2$, such a choice is possible. Thus, fixing $q$ subject to~\eqref{Eq: Choice q} allows to choose $\ell_0$ large enough so as to absorb the $\lambda u$-term on the right-hand side to the left-hand side. Thus, there exists $C > 0$ such that
\begin{align*}
 \sum_{k = 0}^{\infty} 2^{- \nu k} \int_{B(x_0 , 2^k r)} \lvert \lambda u \rvert^2 \, \d x \leq C \sum_{k = 0}^{\infty} 2^{- \nu k} \int_{B(x_0 , 2^k r)} \big( \lvert f \rvert^2 + \lvert \lvert \lambda \rvert^{\frac{1}{2}} F \rvert^2 \big) \, \d x. &\qedhere
\end{align*}
\end{proof}

As a corollary we get that the generalized Stokes operator satisfies resolvent estimates with respect to the Morrey space norm of $\L^{2 , \nu} (\IR^d ; \IC^d)$ for all $0 \leq \nu < 2$. The definition of this Morrey space is the following:

\begin{definition}
\noindent Let $0 \leq \nu < d$ and $m \in \IN$. Define the Morrey space $\L^{2 , \nu} (\IR^d ; \IC^m)$ as the vector space of all functions $u \in \L^2_{\loc} (\IR^d ; \IC^m)$ with finite Morrey space norm
\begin{align*}
 \| u \|_{\L^{2 , \nu}} := \sup_{\substack{x_0 \in \IR^d \\ r > 0}} \bigg( r^{- \nu} \int_{B(x_0 , r)} \lvert u \rvert^2 \, \d x \bigg)^{\frac{1}{2}}.
\end{align*}
\end{definition}

\begin{corollary}
Let $\mu$ satisfy Assumption~\ref{Ass: Coefficients} with constants $\mu^{\bullet} , \mu_{\bullet} > 0$. There exists $\omega \in (\pi / 2 , \pi)$ such that for all $\theta \in (0 , \omega)$ and all $\nu \in [0 , 2)$ there exists a constant $C > 0$ such that for all $\lambda \in \S_{\theta}$, $f \in \L^2_{\sigma} (\IR^d) \cap \L^{2 , \nu} (\IR^d ; \IC^d)$, and $F \in \L^2 (\IR^d ; \IC^{d \times d}) \cap \L^{2 , \nu} (\IR^d ; \IC^{d \times d})$ the unique solution $u \in \H^1_{\sigma} (\IR^d)$ to~\eqref{Eq: Resolvent problem} satisfies
\begin{align*}
 \| \lambda u \|_{\L^{2 , \nu}} \leq C \big( \| f \|_{\L^{2 , \nu}} + \lvert \lambda \rvert^{\frac{1}{2}} \| F \|_{\L^{2 , \nu}} \big).
\end{align*}
Here, the constant $C$ only depends on $d$, $\theta$, $\nu$, $\mu_{\bullet}$, and $\mu^{\bullet}$ and $\omega$ only depends on $d$, $\mu_{\bullet}$, and $\mu^{\bullet}$.
\end{corollary}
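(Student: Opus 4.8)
The plan is to obtain the Morrey estimate directly from Theorem~\ref{Thm: Nonlocal resolvent estimate}, used essentially as a black box; the only real idea is that one must invoke it with an exponent \emph{strictly larger} than $\nu$. Applying Theorem~\ref{Thm: Nonlocal resolvent estimate} with the exponent $\nu$ itself is of no use here, because bounding each term $\int_{B(x_0 , 2^k r)} \lvert f \rvert^2 \, \d x$ on its right-hand side by $(2^k r)^{\nu} \| f \|_{\L^{2 , \nu}}^2$ produces the divergent series $\sum_k r^{\nu} \| f \|_{\L^{2 , \nu}}^2$. If instead one picks $\nu' \in (\nu , 2)$ --- which is possible precisely because $\nu < 2$, and which lies in the admissible range $(0 , 2)$ of the theorem --- the same crude bound now yields $\sum_k 2^{- \nu' k} (2^k r)^{\nu} = r^{\nu} \sum_k 2^{-(\nu' - \nu) k}$, a convergent geometric series. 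This is the whole trick.

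Concretely, I would fix $\nu' := (\nu + 2)/2$ and let $\omega$, $\theta$, and the constant $C_0 = C_0(d , \theta , \nu' , \mu_{\bullet} , \mu^{\bullet})$ be those provided by Theorem~\ref{Thm: Nonlocal resolvent estimate} for the exponent $\nu'$. Fix $x_0 \in \IR^d$, $r > 0$, $\lambda \in \S_{\theta}$, and let $u$ solve~\eqref{Eq: Resolvent problem}; since $f \in \L^2_{\sigma} (\IR^d)$ and $F \in \L^2 (\IR^d ; \IC^{d \times d})$, Theorem~\ref{Thm: Nonlocal resolvent estimate} with exponent $\nu'$ applies. On the left-hand side of its conclusion I retain only the $k = 0$ term (all summands are nonnegative), which equals $\int_{B(x_0 , r)} \lvert \lambda u \rvert^2 \, \d x$; on the right-hand side I estimate $\int_{B(x_0 , 2^k r)} \lvert f \rvert^2 \, \d x \leq (2^k r)^{\nu} \| f \|_{\L^{2 , \nu}}^2$ and likewise for $F$, and sum the resulting geometric series. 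This gives $\int_{B(x_0 , r)} \lvert \lambda u \rvert^2 \, \d x \leq C_1 r^{\nu} ( \| f \|_{\L^{2 , \nu}}^2 + \lvert \lambda \rvert \| F \|_{\L^{2 , \nu}}^2)$ with $C_1 = C_0 / (1 - 2^{-(\nu' - \nu)})$ independent of $x_0$, $r$, $\lambda$. Dividing by $r^{\nu}$, taking the supremum over all $x_0 \in \IR^d$ and $r > 0$, and using $\sqrt{a^2 + b^2} \leq a + b$ then delivers the stated bound with $C := C_1^{1/2}$, which depends only on $d$, $\theta$, $\nu$, $\mu_{\bullet}$, $\mu^{\bullet}$ since $\nu'$ was chosen as a function of $\nu$ alone.

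I do not anticipate any serious obstacle. In particular no absorption argument is needed --- choosing the larger exponent $\nu'$ is exactly what makes the right-hand side converge on its own --- and one need not know beforehand that $\| \lambda u \|_{\L^{2 , \nu}}$ is finite, since the derived inequality already exhibits it as finite once $f \in \L^{2 , \nu}$ and $F \in \L^{2 , \nu}$. The one point requiring a moment's care is the borderline use of the hypothesis $\nu < 2$, which enters solely to guarantee the existence of an admissible $\nu'$ strictly above $\nu$. Finally, the case $\nu = 0$ is subsumed by the same argument (there $\L^{2 , 0} = \L^2$ and the estimate reduces to the standard resolvent bounds~\eqref{Eq: Resolvent estimate}--\eqref{Eq: Resolvent estimate 2}), so it need not be treated separately.
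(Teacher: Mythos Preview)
Your proposal is correct and matches the paper's own proof essentially line for line: the paper also fixes some $\nu < \nu' < 2$, applies Theorem~\ref{Thm: Nonlocal resolvent estimate} with exponent $\nu'$, keeps only the $k=0$ term on the left, bounds each term on the right by $(2^k r)^{\nu}\|f\|_{\L^{2,\nu}}^2$ (and likewise for $F$), sums the resulting geometric series, and divides by $r^{\nu}$. Your exposition is in fact more detailed than the paper's, and your explicit choice $\nu' = (\nu+2)/2$ cleanly handles the endpoint $\nu = 0$ as well.
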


\begin{proof}
Fix $x_0 \in \IR^d$ and $r > 0$. The estimate in Theorem~\ref{Thm: Nonlocal resolvent estimate} readily gives for some $\nu < \nu^{\prime} < 2$
\begin{align*}
 \int_{B(x_0 , r)} \lvert \lambda u \rvert^2 \, \d x \leq C \sum_{k = 0}^{\infty} 2^{- \nu^{\prime} k} \int_{B(x_0 , 2^k r)} \big( \lvert f \rvert^2 + \lvert \lvert \lambda \rvert^{\frac{1}{2}} F \rvert^2 \big) \, \d x \leq C r^{\nu} \big( \| f \|_{\L^{2 , \nu}}^2 + \lvert \lambda \rvert \| F \|_{\L^{2 , \nu}}^2 \big).
\end{align*}
Division by $r^{\nu}$ then delivers the desired estimate.
\end{proof}

\section{$\L^2$ off-diagonal decay for the resolvent}

\noindent This section is dedicated to prove a counterpart of Theorem~\ref{Thm: Generalized Stokes semigroup} for the resolvent of $A$. For this purpose, we introduce another sesquilinear form, which is connected to the Stokes problem in a ball but with Neumann boundary conditions. \par
Let $B \subset \IR^d$ denote a ball and let
\begin{align*}
 \cL^2_{\sigma} (B) := \{ f \in \L^2 (B ; \IC^d) : \divergence(f) = 0 \text{ in the sense of distributions} \}
\end{align*}
and
\begin{align*}
 \cH^1_{\sigma} (B) := \{ f \in \H^1 (B ; \IC^d) : \divergence(f) = 0 \}.
\end{align*}
Now, define the sesquilinear form
\begin{align*}
 \fb_B : \cH^1_{\sigma} (B) \times \cH^1_{\sigma} (B) \to \IC, \quad (u , v) \mapsto \sum_{\alpha , \beta , i , j = 1}^d \int_{\IR^d} \mu^{i j}_{\alpha \beta} \partial_{\beta} u_j \overline{\partial_{\alpha} v_i} \; \d x.
\end{align*}
We abuse the notation and denote the same sesquilinear form but with domain $\H^1 (B ; \IC^d) \times \H^1 (B ; \IC^d)$ again by $\fb_B$. \par
An application of Assumption~\ref{Ass: Coefficients} and the lemma of Lax--Milgram implies the existence of $\omega \in (\pi / 2 , \pi)$ such that for all $\lambda \in \S_{\omega}$, $f \in \cL^2_{\sigma} (B)$, and $F \in \L^2 (B ; \IC^{d \times d})$ the equation
\begin{align}
\label{Eq: Solenoidal distributional Neumann}
 \lambda \int_B u \cdot \overline{v} \, \d x + \fb_B (u , v) = \int_B f \cdot \overline{v} \, \d x - \sum_{\alpha , \beta = 1}^d \int_B F_{\alpha \beta} \overline{\partial_{\alpha} v_{\beta}} \, \d x \quad (v \in \cH^1_{\sigma} (B))
\end{align}
is uniquely solvable for some $u \in \cH^1_{\sigma} (B)$. Moreover, by~\cite[Rem.~5.2]{Tolksdorf}, there exists a pressure function $\phi \in \L^2 (B)$ such that
\begin{align}
\label{Eq: Variational Neumann}
 \lambda \int_B u \cdot \overline{v} \, \d x + \fb_B (u , v) - \int_B \phi \, \overline{\divergence(v)} \, \d x = \int_B f \cdot \overline{v} \, \d x - \int_B F_{\alpha \beta} \overline{\partial_{\alpha} v_{\beta}} \, \d x \quad (v \in \H^1 (B ; \IC^d))
\end{align}
holds. Furthermore, for all $\theta \in (0 , \omega)$ there exists $C > 0$ depending only on $d$, $\theta$, $\mu_{\bullet}$, and $\mu^{\bullet}$ such that for all $\lambda \in \S_{\omega}$, $f \in \cL^2_{\sigma} (B)$, and $F \in \L^2 (B ; \IC^{d \times d})$ it holds
\begin{align}
\label{Eq: Resolvent estimate Neumann}
 \| \lambda u \|_{\L^2 (B)} + \lvert \lambda \rvert^{\frac{1}{2}} \| \nabla u \|_{\L^2 (B)} + \lvert \lambda \rvert^{\frac{1}{2}} \| \phi \|_{\L^2 (B)} \leq C \big( \| f \|_{\L^2 (B)} + \lvert \lambda \rvert^{\frac{1}{2}} \| F \|_{\L^2 (B)} \big).
\end{align}

To proceed, we cite some results from~\cite{Tolksdorf}. The first result is a non-local Caccioppoli inequality for the generalized Stokes resolvent and can be found in~\cite[Thm.~1.2]{Tolksdorf}.

\begin{theorem}
\label{Thm: Non-local Caccioppoli}
Let $\mu$ satisfy Assumption~\ref{Ass: Coefficients} for some constants $\mu_{\bullet} , \mu^{\bullet} > 0$. Then there exists $\omega \in (\pi / 2 , \pi)$ such that for all $\theta \in (0 , \omega)$ and all $0 < \nu < d + 2$ there exists $C > 0$ such that for all $\lambda \in \S_{\theta}$, $f \in \L^2_{\sigma} (\IR^d)$, $F \in \L^2 (\IR^d ; \IC^{d \times d})$ the solution $u \in \H^1_{\sigma} (\IR^d)$ to
\begin{align*}
 \lambda \int_{\IR^d} u \cdot \overline{v} \, \d x + \fa (u , v) = \int_{\IR^d} f \cdot \overline{v} \, \d x - \sum_{\alpha , \beta = 1}^d \int_{\IR^d} F_{\alpha \beta} \, \overline{\partial^{\alpha} v_{\beta}} \, \d x \qquad (v \in \H^1_{\sigma} (\IR^d))
\end{align*}
satisfies for all balls $B = B(x_0 , r)$ and all sequences $(c_k)_{k \in \IN_0}$ with $c_k \in \IC^d$
\begin{align*}
 &\lvert \lambda \rvert \sum_{k = 0}^{\infty} 2^{- \nu k} \int_{B(x_0 , 2^k r)} \lvert u \rvert^2 \, \d x + \sum_{k = 0}^{\infty} 2^{- \nu k} \int_{B(x_0 , 2^k r)} \lvert \nabla u \rvert^2 \, \d x \\
 &\qquad\qquad \leq \frac{C}{r^2} \sum_{k = 0}^{\infty} 2^{- (\nu + 2) k} \int_{B(x_0 , 2^{k + 1} r)} \lvert u + c_k \rvert^2 \, \d x + \lvert \lambda \rvert \sum_{k = 0}^{\infty} \lvert c_k \rvert 2^{- \nu k} \int_{B(x_0 , 2^{k + 1} r)} \lvert u \rvert \, \d x \\
 &\qquad\qquad\qquad + \frac{C}{\lvert \lambda \rvert} \sum_{k = 0}^{\infty} 2^{- \nu k} \int_{B(x_0 , 2^{k + 1} r)} \lvert f \rvert^2 \, \d x + C \sum_{k = 0}^{\infty} 2^{- \nu k} \int_{B(x_0 , 2^{k + 1} r)} \lvert F \rvert^2 \, \d x.
\end{align*}
The constant $\omega$ only depends on $\mu_{\bullet}$, $\mu^{\bullet}$, and $d$ and $C$ depends on $\mu_{\bullet}$, $\mu^{\bullet}$, $d$, $\theta$, and $\nu$.
\end{theorem}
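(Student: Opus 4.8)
The plan is to derive the estimate scale by scale and then to sum the single-scale estimates against the weights $2^{-\nu k}$, the summation being the step that renders the inequality non-local. Write $B_k := B(x_0 , 2^k r)$ and fix a smooth cut-off $\eta_k$ with $\eta_k \equiv 1$ on $B_k$, $\supp \eta_k \subset B_{k+1}$ and $\lvert \nabla \eta_k \rvert \lesssim (2^k r)^{-1}$. The obstruction specific to the Stokes setting is that $\eta_k^2 (u + c_k)$ is not solenoidal, hence inadmissible in the weak formulation of the statement. I would restore solenoidality by solving, on the annulus $B_{k+1} \setminus B_k$, the problem $\divergence(w_k) = \nabla(\eta_k^2) \cdot (u + c_k)$ with the Bogovski\u{\i} operator: its right-hand side has vanishing mean because $\divergence(u) = 0$ and $\nabla(\eta_k^2)$ is compactly supported, so there is $w_k \in \H^1_0 (B_{k+1} \setminus B_k ; \IC^d)$ with the scale-invariant bounds $\lVert \nabla w_k \rVert_{\L^2} \lesssim (2^k r)^{-1} \lVert u + c_k \rVert_{\L^2 (B_{k+1} \setminus B_k)}$ and $\lVert w_k \rVert_{\L^2} \lesssim \lVert u + c_k \rVert_{\L^2 (B_{k+1} \setminus B_k)}$. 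Then $v_k := \eta_k^2 (u + c_k) - w_k \in \H^1_{\sigma} (\IR^d)$ is an admissible test function supported in $\overline{B_{k+1}}$, and I would insert it into the weak formulation. Keeping $c_k$ as a free constant throughout is precisely what produces the constants in the asserted inequality; it is instructive to think of $c_k = - \fint_{B_{k+1}} u$, which renders $\lvert c_k \rvert \lesssim \fint_{B_{k+1}} \lvert u \rvert$.

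The first delicate point is coercivity. Assumption~\ref{Ass: Coefficients} only provides the \emph{integrated} G\r{a}rding inequality~\eqref{Eq: Ellipticity} and no pointwise ellipticity, so one cannot pull the weight $\eta_k^2$ inside the form. Instead I would apply~\eqref{Eq: Ellipticity} to $\eta_k (u + c_k) \in \H^1 (\IR^d ; \IC^d)$ and use the algebraic identity that, for real $\eta_k$, rewrites $\Re \fa(u , \eta_k^2 (u + c_k))$ as $\Re \fa(\eta_k (u + c_k) , \eta_k (u + c_k))$ modulo commutator terms controlled pointwise by $\eta_k \lvert \nabla \eta_k \rvert \lvert \nabla u \rvert \lvert u + c_k \rvert$ and $\lvert \nabla \eta_k \rvert^2 \lvert u + c_k \rvert^2$; a Young inequality sends the first into $\delta \int \eta_k^2 \lvert \nabla u \rvert^2$ and the remainder into the Caccioppoli term $(2^k r)^{-2} \int_{B_{k+1}} \lvert u + c_k \rvert^2$, after which~\eqref{Eq: Ellipticity} yields a genuine lower bound $c \int \eta_k^2 \lvert \nabla u \rvert^2$. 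The contribution of $w_k$ to the form is estimated by $\mu^{\bullet} \lVert \nabla u \rVert_{\L^2 (B_{k+1} \setminus B_k)} \lVert \nabla w_k \rVert_{\L^2}$ and, through the Bogovski\u{\i} bounds, splits into a gradient piece on the shell and a further Caccioppoli term. For the zeroth order part, $\lambda \int u \cdot \overline{\eta_k^2 (u + c_k)}$ contributes $\lambda \int \eta_k^2 \lvert u \rvert^2$ together with the cross term $\lambda \overline{c_k} \cdot \int \eta_k^2 u$, whose modulus is bounded by $\lvert \lambda \rvert \lvert c_k \rvert \int_{B_{k+1}} \lvert u \rvert$; this is exactly the second term of the claim and is kept on the right-hand side. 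The data are handled by Young's inequality, producing $\lvert \lambda \rvert^{-1} \int_{B_{k+1}} \lvert f \rvert^2$ and $\int_{B_{k+1}} \lvert F \rvert^2$ together with absorbable multiples of $\lvert \lambda \rvert \int \eta_k^2 \lvert u \rvert^2$ and $\int \eta_k^2 \lvert \nabla u \rvert^2$; here one uses $\divergence(f) = 0$ to integrate $\int \eta_k^2 f$ by parts against $\nabla(\eta_k^2)$ so that the constant $c_k$ only meets the data on the shell. Extracting simultaneously $\lvert \lambda \rvert \int_{B_k} \lvert u \rvert^2$ and $\int_{B_k} \lvert \nabla u \rvert^2$ from the left-hand side uses the sectoriality encoded in the admissible angle $\omega$, exactly as in the derivation of the resolvent estimate~\eqref{Eq: Resolvent estimate}: one tests against a unimodular rotation of $v_k$ and takes real parts.

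The outcome of the previous step is a single-scale inequality bounding $\lvert \lambda \rvert \int_{B_k} \lvert u \rvert^2 + \int_{B_k} \lvert \nabla u \rvert^2$ by $C (2^k r)^{-2} \int_{B_{k+1}} \lvert u + c_k \rvert^2 + \lvert \lambda \rvert \lvert c_k \rvert \int_{B_{k+1}} \lvert u \rvert + C \lvert \lambda \rvert^{-1} \int_{B_{k+1}} \lvert f \rvert^2 + C \int_{B_{k+1}} \lvert F \rvert^2$, plus error terms of the form $\delta ( \lvert \lambda \rvert \int_{B_{k+1}} \lvert u \rvert^2 + \int_{B_{k+1}} \lvert \nabla u \rvert^2 )$ supported on the shell $B_{k+1} \setminus B_k$. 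This shift by one dyadic annulus is the crux: the error terms cannot be absorbed scale by scale, and this is the analytic signature of the non-locality of the Stokes resolvent. The remedy is to multiply by $2^{-\nu k}$ and sum over $k \in \IN_0$; since $\sum_k 2^{-\nu k} \int_{B_{k+1}} \lvert \cdot \rvert^2 = 2^{\nu} \sum_k 2^{-\nu (k+1)} \int_{B_{k+1}} \lvert \cdot \rvert^2 \leq 2^{\nu} \sum_k 2^{-\nu k} \int_{B_k} \lvert \cdot \rvert^2$, the shifted error sums are bounded by $2^{\nu} \delta$ times the full weighted left-hand side, so choosing $\delta$ small enough closes the argument; the a priori finiteness of all sums needed to justify the absorption follows from $u \in \H^1_{\sigma} (\IR^d)$. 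After absorption the factor $(2^k r)^{-2}$ combines with $2^{-\nu k}$ to give exactly $r^{-2} 2^{-(\nu + 2) k}$ and the data sums reproduce the right-hand side term by term, all geometric series converging in the stated range $0 < \nu < d + 2$. The main obstacle is therefore this cross-scale absorption rather than any single local estimate.
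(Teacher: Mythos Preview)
The present paper does not prove this theorem; it is quoted from \cite[Thm.~1.2]{Tolksdorf}. Judging from the tools that reference supplies (notably the non-local pressure estimate reproduced here as Lemma~\ref{Lem: Non-local pressure estimate}), the intended argument keeps the pressure: one tests the distributional system with the non-solenoidal function $\eta_k^2(u+c_k)$ and controls the resulting term $\int (\phi-\phi_{\cC_k})\,\overline{\divergence(\eta_k^2(u+c_k))}$ via Lemma~\ref{Lem: Non-local pressure estimate}. The decay exponents $\tfrac{d}{2}$ and $\tfrac{d}{2}+1$ appearing there are exactly what produce the threshold $\nu<d+2$, a restriction your outline never accounts for.

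Your route---avoid the pressure altogether via a Bogovski\u{\i} correction $w_k$ and absorb shell errors after weighted summation---has a concrete gap. You discuss the contribution of $w_k$ to the form $\fa$ but not to the zeroth-order term: testing with $v_k=\eta_k^2(u+c_k)-w_k$ also generates $-\lambda\int u\cdot\overline{w_k}$, and the only available bound is $\|w_k\|_{\L^2}\lesssim\|u+c_k\|_{\L^2(B_{k+1}\setminus B_k)}$. This piece is therefore controlled by $C\,\lvert\lambda\rvert\,\|u\|_{\L^2(B_{k+1}\setminus B_k)}\|u+c_k\|_{\L^2(B_{k+1}\setminus B_k)}$, and after Young's inequality it leaves a shell error $C\,\lvert\lambda\rvert\int_{B_{k+1}\setminus B_k}\lvert u\rvert^2$ whose constant is \emph{not} the small $\delta$ you claim. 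Your absorption step requires the shifted error to carry a factor strictly below $2^{-\nu}$; with a fixed $C$ in place of $\delta$ this fails once $\nu$ is moderately large, so the argument as written does not reach the full range $0<\nu<d+2$. (Hole-filling would salvage a single geometric weight $\theta^k$ with some fixed $\theta<1$, not the free weight $2^{-\nu k}$ the statement demands.) A smaller point: your claim that $\divergence(f)=0$ lets you localize $c_k\cdot\int\eta_k^2 f$ to the shell is not correct either---there is no derivative on $f$ to move onto $\eta_k^2$.
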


The second result is an estimate on the pressure function $\phi$ that appears in~\eqref{Eq: Resolvent problem} and can be found in~\cite[Lem.~2.1]{Tolksdorf}. To formulate this lemma, we adopt the notation $\cC_k := \overline{B(x_0 , 2^k r)} \setminus B(x_0 , 2^{k - 1} r)$ for $k \in \IN$ and write $\phi_{\cC_k}$ for the mean value of $\phi$ on the set $\cC_k$.

\begin{lemma}
\label{Lem: Non-local pressure estimate}
Let $\mu$ satisfy Assumption~\ref{Ass: Coefficients} for some constants $\mu_{\bullet} , \mu^{\bullet} > 0$. Let $\lambda \in \IC$ and let for $f \in \L^2_{\sigma} (\IR^d)$ and $F \in \L^2 (\IR^d ; \IC^{d \times d})$ the functions $u \in \H^1_{\sigma} (\IR^d)$ and $\phi \in \L^2_{\loc} (\IR^d)$ solve
\begin{align*}
 \left\{ \begin{aligned}
  \lambda u - \divergence \mu \nabla u + \nabla \phi &= f + \divergence(F) && \text{in } \IR^d, \\
  \divergence (u) &= 0 && \text{in } \IR^d
 \end{aligned} \right.
\end{align*}
in the sense of distributions. Let $x_0 \in \IR^d$ and $r > 0$ let $\cC_0$ denote the ball $B(x_0 , r)$. Then there exists a constant $C > 0$ depending only on $\mu^{\bullet}$ and $d$ such that for all $k \in \IN$ we have
\begin{align*}
 &\bigg(\int_{\cC_k} \lvert \phi - \phi_{\cC_k} \rvert^2 \, \d x \bigg)^{\frac{1}{2}} \\
 &\qquad\leq C \bigg( \sum_{\ell = 0}^{k - 2} 2^{\frac{d}{2} (\ell - k)} \big( \| \nabla u \|_{\L^2 (\cC_{\ell})} + \| F \|_{\L^2 (\cC_{\ell})} \big) + \sum_{\substack{\ell \in \IN_0 \\ \lvert \ell - k \rvert \leq 1}} \big( \| \nabla u \|_{\L^2 (\cC_{\ell})} + \| F \|_{\L^2 (\cC_{\ell})} \big) \\
 &\qquad\qquad + \sum_{\ell = k + 2}^{\infty} 2^{(\frac{d}{2} + 1) (k - \ell)} \big( \| \nabla u \|_{\L^2 (\cC_{\ell})} + \| F \|_{\L^2 (\cC_{\ell})} \big) \bigg).
\end{align*}
\end{lemma}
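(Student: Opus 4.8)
The strategy is to reduce the claim to the $\L^2$--boundedness and the standard pointwise bounds for the kernel of the Calder\'on--Zygmund operator $\divergence\divergence\Delta^{-1}$, after observing that the pressure is, up to an additive constant, an explicit singular integral of $G := F + \mu\nabla u$. The key preliminary is to identify $\phi$. Taking the divergence of the momentum equation and using $\divergence(f) = 0$ (since $f\in\L^2_\sigma(\IR^d)$) and $\divergence(u) = 0$ yields $\Delta\phi = \divergence\divergence G$ with $G := F + \mu\nabla u \in \L^2(\IR^d ; \IC^{d\times d})$. Writing $N$ for the fundamental solution of the Laplacian on $\IR^d$, the function $\tilde\phi := \sum_{\alpha,\beta = 1}^d (\partial_\alpha\partial_\beta N) * G_{\alpha\beta}$ solves the same equation, lies in $\L^2(\IR^d)$ by $\L^2$--boundedness of double Riesz transforms, and $\phi - \tilde\phi$ is harmonic. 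Reading $\nabla\phi = (f - \lambda u) + \divergence(G)$ off the momentum equation and noting (a short Fourier--side computation) $\nabla\tilde\phi = \divergence(G) - \IP\divergence(G)$, we get $\nabla(\phi - \tilde\phi) = (f - \lambda u) + \IP\divergence(G)$, a tempered distribution with locally integrable Fourier transform; being also harmonic, hence a polynomial whose Fourier transform is supported at the origin, it must vanish. Thus $\phi = \tilde\phi$ up to a constant, which does not affect $\phi - \phi_{\cC_k}$, so we may assume $\phi = \sum_{\alpha,\beta}(\partial_\alpha\partial_\beta N) * G_{\alpha\beta}$.

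Next I would localize the source. Since the dyadic annuli $\cC_\ell$ (with $\cC_0 = B(x_0,r)$) partition $\IR^d$ up to a null set, and $\phi^{(\ell)} := \sum_{\alpha,\beta}(\partial_\alpha\partial_\beta N)*(G_{\alpha\beta}\chi_{\cC_\ell})$ satisfies $\phi = \sum_{\ell\geq 0}\phi^{(\ell)}$ in $\L^2(\IR^d)$, we obtain
\begin{align*}
 \| \phi - \phi_{\cC_k} \|_{\L^2(\cC_k)} \leq \sum_{\ell = 0}^{\infty} \| \phi^{(\ell)} - (\phi^{(\ell)})_{\cC_k} \|_{\L^2(\cC_k)},
\end{align*}
and it remains to estimate each summand, then combine with $\| G \|_{\L^2(\cC_\ell)} \leq \| F \|_{\L^2(\cC_\ell)} + C(d)\,\mu^{\bullet}\| \nabla u \|_{\L^2(\cC_\ell)}$. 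When $\lvert \ell - k \rvert \leq 1$ it suffices to bound $\| \phi^{(\ell)} - (\phi^{(\ell)})_{\cC_k} \|_{\L^2(\cC_k)} \leq 2\| \phi^{(\ell)} \|_{\L^2(\cC_k)} \leq 2\| \phi^{(\ell)} \|_{\L^2(\IR^d)} \lesssim_d \| G \|_{\L^2(\cC_\ell)}$, which produces the middle sum in the statement.

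For the two remaining regimes one exploits that $\dist(\cC_\ell,\cC_k)\simeq 2^{\max(k,\ell)}r$ when $\lvert\ell-k\rvert\geq 2$; in particular, on $\cC_k$ the honest integral representation $\phi^{(\ell)}(x) = \int_{\cC_\ell}(\partial_\alpha\partial_\beta N)(x-y)\,G_{\alpha\beta}(y)\,\d y$ is available, since $x$ lies off $\overline{\cC_\ell}$. If $\ell\leq k-2$, then $\lvert(\partial_\alpha\partial_\beta N)(x-y)\rvert\lesssim_d\lvert x-y\rvert^{-d}\lesssim_d (2^kr)^{-d}$, so by Cauchy--Schwarz $\lvert\phi^{(\ell)}(x)\rvert\lesssim_d (2^kr)^{-d}\lvert\cC_\ell\rvert^{1/2}\|G\|_{\L^2(\cC_\ell)}\simeq (2^kr)^{-d}(2^\ell r)^{d/2}\|G\|_{\L^2(\cC_\ell)}$, and multiplying by $\lvert\cC_k\rvert^{1/2}\simeq(2^kr)^{d/2}$ gives the factor $2^{\frac d2(\ell-k)}$ of the first sum. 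If $\ell\geq k+2$, then $\diam(\cC_k)\lesssim 2^kr\ll 2^\ell r\simeq\lvert x-y\rvert$ and one uses the cancellation
\begin{align*}
 \phi^{(\ell)}(x) - (\phi^{(\ell)})_{\cC_k} = \fint_{\cC_k}\int_{\cC_\ell}\big((\partial_\alpha\partial_\beta N)(x-y) - (\partial_\alpha\partial_\beta N)(z-y)\big)\,G_{\alpha\beta}(y)\,\d y\,\d z
\end{align*}
together with the mean value theorem and $\lvert\nabla\partial_\alpha\partial_\beta N(w)\rvert\lesssim_d\lvert w\rvert^{-d-1}$, giving $\lvert(\partial_\alpha\partial_\beta N)(x-y) - (\partial_\alpha\partial_\beta N)(z-y)\rvert\lesssim_d 2^kr\,(2^\ell r)^{-d-1}$; Cauchy--Schwarz and multiplication by $\lvert\cC_k\rvert^{1/2}$ then yield the factor $2^{(\frac d2 + 1)(k-\ell)}$ of the third sum. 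Summing the three contributions over $\ell$ reproduces exactly the asserted inequality with a constant depending only on $d$ and $\mu^{\bullet}$; for $k\in\{0,1\}$ the first (and, for $k=0$, the third) sum is empty, consistently with the statement.

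The only genuinely non-routine step is the identification of $\phi$ in the first paragraph: since the lemma assumes only $\phi\in\L^2_{\loc}(\IR^d)$, one must rule out a nontrivial harmonic (polynomial) part of $\phi-\tilde\phi$, which is why it is cleanest to argue at the level of $\nabla\phi$, where the momentum equation exhibits $\nabla(\phi-\tilde\phi)$ as a tempered distribution with locally integrable Fourier transform, forcing it to vanish (alternatively, one invokes uniqueness of the pressure of the resolvent problem on $\IR^d$ modulo constants). Everything afterwards is bookkeeping with the Calder\'on--Zygmund kernel $\partial_\alpha\partial_\beta N$ and the elementary separation bound $\dist(\cC_\ell,\cC_k)\simeq 2^{\max(k,\ell)}r$ valid when $\lvert\ell-k\rvert\geq 2$.
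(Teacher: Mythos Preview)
The paper does not supply a proof of this lemma; it simply quotes the result from \cite[Lem.~2.1]{Tolksdorf}. Your argument is correct and is the natural Calder\'on--Zygmund route: identify $\phi$ (modulo constants) with the double Riesz transform $\sum_{\alpha,\beta}(\partial_\alpha\partial_\beta N)*G_{\alpha\beta}$ of $G = F + \mu\nabla u$, localize the source to the annuli $\cC_\ell$, and then treat the three regimes by, respectively, $\L^2$-boundedness of the Riesz transforms ($\lvert \ell - k\rvert\le 1$), the size bound $\lvert\partial_\alpha\partial_\beta N(w)\rvert\lesssim\lvert w\rvert^{-d}$ ($\ell\le k-2$), and the smoothness bound $\lvert\nabla\partial_\alpha\partial_\beta N(w)\rvert\lesssim\lvert w\rvert^{-d-1}$ together with the mean subtraction ($\ell\ge k+2$). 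The bookkeeping with $\lvert\cC_\ell\rvert^{1/2}\simeq(2^\ell r)^{d/2}$ yields precisely the weights $2^{\frac d2(\ell-k)}$ and $2^{(\frac d2+1)(k-\ell)}$ in the statement, and the constant visibly depends only on $d$ and $\mu^{\bullet}$.

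The one step that requires care---and that you rightly flag---is the identification of $\phi$ with $\tilde\phi$ up to a constant, since the lemma assumes only $\phi\in\L^2_{\loc}(\IR^d)$. Your argument via $\nabla(\phi-\tilde\phi) = (f-\lambda u) + \IP\divergence(G)$ is sound: this is a tempered distribution (it lies in $\L^2 + \H^{-1}$) with Fourier transform equal a.e.\ to a locally integrable function, while harmonicity forces it to be a polynomial, hence to have Fourier transform supported at the origin; the two are compatible only if it vanishes. This is the expected proof, and there is nothing substantive to contrast with the paper since no proof is given there.
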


The final preparatory result we need is a local Caccioppoli inequality that includes the pressure function.

\begin{lemma}
\label{Lem: Caccioppoli}
Let $\mu$ satisfy Assumption~\ref{Ass: Coefficients} for some constants $\mu_{\bullet} , \mu^{\bullet} > 0$. Then there exists $\omega \in (\pi / 2 , \pi)$ such that for all $\theta \in (0 , \omega)$ there exists $C > 0$ such that for all $x_0 \in \IR^d$, $r > 0$, $c \in \IC$, and all solutions $u \in \cH^1_{\sigma} (B(x_0 , 2 r))$ and $\phi \in \L^2 (B(x_0 , 2 r))$ (in the sense of distributions) to
\begin{align*}
 \left\{ \begin{aligned}
  \lambda u - \divergence \mu \nabla u + \nabla \phi &= 0 && \text{in } B(x_0 , 2 r), \\
  \divergence (u) &= 0 && \text{in } B(x_0 , 2 r)
 \end{aligned} \right.
\end{align*}
satisfy
\begin{align*}
 \lvert \lambda \rvert \int_{B(x_0 , r)} \lvert u \rvert^2 \, \d x + \int_{B(x_0 , r)} \lvert \nabla u \rvert^2 \, \d x \leq \frac{C}{r^2} \int_{B(x_0 , 2 r)} \lvert u \rvert^2 \, \d x + \frac{C}{\lvert \lambda \rvert r^2} \int_{B(x_0 , 2 r)} \lvert \phi - c \rvert^2 \, \d x.
 \end{align*}
The constant $C$ only depends on $d$, $\theta$, $\mu_{\bullet}$, and $\mu^{\bullet}$.
\end{lemma}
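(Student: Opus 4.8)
\emph{Plan.} Since the equation is unchanged if $\phi$ is replaced by $\phi-c$, we may assume $c=0$ throughout and recover the general case at the very end by applying the result to $(u,\phi-c)$. The strategy is a Caccioppoli test with $\eta^2u$ followed by a hole-filling iteration over the shell $B(x_0,r)\subset B(x_0,2r)$. Fix radii $r\le\rho<\rho'\le 2r$ and a cut-off $\eta\in C^\infty_c(B(x_0,\rho'))$ with $\eta\equiv 1$ on $B(x_0,\rho)$, $0\le\eta\le1$ and $\|\nabla\eta\|_{\L^\infty}\le C(\rho'-\rho)^{-1}$. Extending the distributional formulation of the equation to test functions in $\H^1_0(B(x_0,2r);\IC^d)$ by density and testing with $\eta^2u$, using $\divergence u=0$ (so $\divergence(\eta^2u)=\nabla(\eta^2)\cdot u$) and the product rule $\eta\partial_\beta u_j=\partial_\beta(\eta u_j)-u_j\partial_\beta\eta$, one rearranges the outcome into
\begin{align*}
 \lambda\int\eta^2\lvert u\rvert^2\,\d x+G=R,\qquad G:=\sum_{\alpha,\beta,i,j=1}^d\int\mu^{ij}_{\alpha\beta}\,\partial_\beta(\eta u_j)\,\overline{\partial_\alpha(\eta u_i)}\,\d x,
\end{align*}
where $R$ is a sum of three bilinear terms each carrying a factor $\nabla\eta$ plus the pressure term $\int\phi\,\overline{2\eta\nabla\eta\cdot u}\,\d x$. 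With $Y:=\|\nabla(\eta u)\|_{\L^2}^2$ and $Z:=\int\lvert\nabla\eta\rvert^2\lvert u\rvert^2\,\d x$, Assumption~\ref{Ass: Coefficients} applied to $\eta u$ gives $\Re G\ge\mu_\bullet Y$ and $\lvert G\rvert\le\mu^\bullet d^2 Y$, while $\lvert R\rvert\le\mu^\bullet d^2\big(2Y^{1/2}Z^{1/2}+Z\big)+2\|\phi\|_{\L^2(B(x_0,2r))}Z^{1/2}$ and $Z\le C(\rho'-\rho)^{-2}\int_{B(x_0,\rho')\setminus B(x_0,\rho)}\lvert u\rvert^2\,\d x$.

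The crux is to pass from the scalar identity $\lambda\int\eta^2\lvert u\rvert^2+G=R$ to $\lvert\lambda\rvert\int\eta^2\lvert u\rvert^2+Y\le C_0\lvert R\rvert$. If $\Re\lambda\ge0$ this is immediate: taking real parts and using $0\le\mu_\bullet Y\le\Re G$ bounds $Y$ and the nonnegative quantity $\Re\lambda\int\eta^2\lvert u\rvert^2$ by $\lvert R\rvert$, and taking imaginary parts and using $\lvert\Im G\rvert\le\mu^\bullet d^2 Y$ bounds $\lvert\Im\lambda\rvert\int\eta^2\lvert u\rvert^2$ by $(1+\mu^\bullet d^2/\mu_\bullet)\lvert R\rvert$. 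If $\Re\lambda<0$ then $\lvert\arg\lambda\rvert>\pi/2$, hence $\lvert\Im\lambda\rvert\ge\lvert\lambda\rvert\sin\theta$ and $\lvert\Re\lambda\rvert\le\lvert\lambda\rvert\lvert\cos\theta\rvert$; the imaginary part gives $\lvert\lambda\rvert\int\eta^2\lvert u\rvert^2\le(\sin\theta)^{-1}(\lvert R\rvert+\mu^\bullet d^2 Y)$, and inserting this into $\mu_\bullet Y\le\Re G=\Re R+\lvert\Re\lambda\rvert\int\eta^2\lvert u\rvert^2$ yields $(\mu_\bullet-\mu^\bullet d^2\lvert\cot\theta\rvert)Y\le(1+\lvert\cot\theta\rvert)\lvert R\rvert$. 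This is where $\omega$ is fixed: choose $\omega\in(\pi/2,\pi)$ so close to $\pi/2$ that $\mu^\bullet d^2\lvert\cot\omega\rvert\le\mu_\bullet/2$ (possible since $\cot\omega\to0$ as $\omega\to\pi/2^+$); then for every $\theta\in(0,\omega)$ with $\theta>\pi/2$ one has $\lvert\cot\theta\rvert<\lvert\cot\omega\rvert$ and $\sin\theta>\sin\omega$, so $Y\le C_0\lvert R\rvert$ and hence also $\lvert\lambda\rvert\int\eta^2\lvert u\rvert^2\le C_0\lvert R\rvert$. I expect this to be the main obstacle: a cruder sectorial argument fails because the boundedness constant $\mu^\bullet d^2$ of the form can dominate its ellipticity constant $\mu_\bullet$, and only taking $\omega$ near $\pi/2$ keeps the competing term small enough to be absorbed.

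With $\lvert\lambda\rvert\int\eta^2\lvert u\rvert^2+Y\le C_0\lvert R\rvert$ in hand, estimate $\lvert R\rvert$ by Young's inequality: the bilinear terms contribute $\le(2C_0)^{-1}Y+C(\rho'-\rho)^{-2}\int_{B(x_0,2r)}\lvert u\rvert^2\,\d x$, and splitting the pressure term with weight $\lvert\lambda\rvert(\rho'-\rho)^2$ contributes $\le C\lvert\lambda\rvert\int_{B(x_0,\rho')\setminus B(x_0,\rho)}\lvert u\rvert^2\,\d x+C\lvert\lambda\rvert^{-1}(\rho'-\rho)^{-2}\int_{B(x_0,2r)}\lvert\phi\rvert^2\,\d x$. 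Absorbing $(2C_0)^{-1}Y$ and using $\int_{B(x_0,\rho)}\lvert\nabla u\rvert^2\le 2Y+2Z$ gives, for all $r\le\rho<\rho'\le 2r$,
\begin{align*}
 \lvert\lambda\rvert\int_{B(x_0,\rho)}\lvert u\rvert^2\,\d x+\int_{B(x_0,\rho)}\lvert\nabla u\rvert^2\,\d x\le M\lvert\lambda\rvert\int_{B(x_0,\rho')\setminus B(x_0,\rho)}\lvert u\rvert^2\,\d x+\frac{C}{(\rho'-\rho)^2}\bigg(\int_{B(x_0,2r)}\lvert u\rvert^2\,\d x+\frac{1}{\lvert\lambda\rvert}\int_{B(x_0,2r)}\lvert\phi\rvert^2\,\d x\bigg).
\end{align*}

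Finally, adding $M\lvert\lambda\rvert\int_{B(x_0,\rho)}\lvert u\rvert^2$ to both sides (``filling the hole'') converts the annulus integral on the right into an integral over $B(x_0,\rho')$ with coefficient $\tfrac{M}{M+1}<1$; discarding the gradient term on the left, a standard iteration lemma applied on $[\rho_0,2r]$ for $\rho_0\in\{r,\tfrac{3r}{2}\}$ shows that $\lvert\lambda\rvert\int_{B(x_0,r)}\lvert u\rvert^2\,\d x$ and $\lvert\lambda\rvert\int_{B(x_0,3r/2)}\lvert u\rvert^2\,\d x$ are both $\le Cr^{-2}\big(\int_{B(x_0,2r)}\lvert u\rvert^2\,\d x+\lvert\lambda\rvert^{-1}\int_{B(x_0,2r)}\lvert\phi\rvert^2\,\d x\big)$. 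Feeding the bound on $\lvert\lambda\rvert\int_{B(x_0,3r/2)}\lvert u\rvert^2$ back into the displayed inequality with $(\rho,\rho')=(r,\tfrac{3r}{2})$ controls $\int_{B(x_0,r)}\lvert\nabla u\rvert^2$ in the same way; adding the two estimates, and then applying the result to $(u,\phi-c)$ to reinstate a general constant $c$, proves the lemma.
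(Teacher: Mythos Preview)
Your argument is correct, and in fact it is more self-contained than the paper's: the paper simply invokes an external Caccioppoli-type lemma (\cite[Lem.~5.1]{Tolksdorf}) which already delivers
\[
\lvert\lambda\rvert\int\lvert u\eta\rvert^2+\int\lvert\nabla(u\eta)\rvert^2\le \frac{C}{r^2}\int_{B(x_0,2r)}\lvert u\rvert^2+\frac{4}{r}\,\|\phi-c\|_{\L^2(B(x_0,2r)\setminus B(x_0,r))}\,\|u\eta\|_{\L^2},
\]
so the sectorial passage from $\lambda$ to $\lvert\lambda\rvert$ that you carefully spell out is hidden in the citation. The real methodological difference is in how the pressure cross term is split. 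You bound $\big|\int\phi\,\overline{2\eta\nabla\eta\cdot u}\big|$ by $2\|\phi\|\,Z^{1/2}$ with $Z=\int\lvert\nabla\eta\rvert^2\lvert u\rvert^2$; after Young this produces $C\lvert\lambda\rvert\int_{B(x_0,\rho')\setminus B(x_0,\rho)}\lvert u\rvert^2$, an annulus term that cannot be absorbed and forces the hole-filling iteration. The paper instead keeps the factor $\eta$ with $u$, bounding the same integral by $\frac{4}{r}\|\phi-c\|\,\|u\eta\|$; after Young with weight $\lvert\lambda\rvert$ this yields $\frac{\lvert\lambda\rvert}{2}\|u\eta\|^2$, which is directly absorbed into the left-hand side $\lvert\lambda\rvert\|u\eta\|^2$ without any iteration. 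So your route works but is longer; the paper's grouping of the $\eta$ factor buys a one-step proof with a single cut-off at scale $r$.
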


\begin{proof}
Let $\eta \in \C_c^{\infty} (B(x_0 , 2 r))$ with $\eta \equiv 1$ in $B(x_0 , r)$, $0 \leq \eta \leq 1$, and $\| \nabla \eta \|_{\L^{\infty}} \leq 2 / r$. Applying~\cite[Lem.~5.1]{Tolksdorf} with $c_1 = c$ and $c_2 = 0$ implies that
\begin{align*}
 &\lvert \lambda \rvert \int_{B(x_0 , 2 r)} \lvert u \eta \rvert^2 \, \d x + \int_{B(x_0 , 2 r)} \lvert \nabla [u \eta] \rvert^2 \, \d x \\
 &\qquad\leq \frac{C}{r^2} \int_{B(x_0 , 2 r)} \lvert u \rvert^2 \, \d x + \frac{4}{r} \bigg( \int_{B(x_0 , 2 r) \setminus B(x_0 , r)} \lvert \phi - c \rvert^2 \, \d x \bigg)^{\frac{1}{2}} \bigg( \int_{B(x_0 , 2 r)} \lvert u \eta \rvert^2 \, \d x \bigg)^{\frac{1}{2}}.
\end{align*}
Use Young's inequality to estimate
\begin{align*}
 \frac{4}{r} \bigg( \int_{B(x_0 , 2 r) \setminus B(x_0 , r)} \lvert \phi - c \rvert^2 \, \d x \bigg)^{\frac{1}{2}} \bigg( \int_{B(x_0 , 2 r)} \lvert u \eta \rvert^2 \, \d x \bigg)^{\frac{1}{2}} &\leq \frac{8}{\lvert \lambda \rvert r^2} \int_{B(x_0 , 2 r) \setminus B(x_0 , r)} \lvert \phi - c \rvert^2 \, \d x \\
 &\qquad + \frac{\lvert \lambda \rvert}{2} \int_{B(x_0 , 2 r)} \lvert u \eta \rvert^2 \, \d x.
\end{align*}
The lemma follows by absorbing the $u \eta$-term to the left-hand side and by using the properties of $\eta$. Finally, we would like to mention that the proof of~\cite[Lem.~5.1]{Tolksdorf} follows the standard proof that is used to establish the Caccioppoli inequality for elliptic systems and this is well-known.
\end{proof}

The following theorem presents $\L^2$ off-diagonal type estimates for the resolvent operators.

\begin{theorem}
\label{Thm: Off-diagonal for resolvent}
There exists $\omega \in (\pi / 2 , \pi)$ such that for all $\theta \in (0 , \omega)$ and all $\nu \in (0 , 2)$ there exists a constant $C > 0$ such that for all $x_0 \in \IR^d$, $r > 0$, $\lambda \in \S_{\theta}$, $f \in \L^2_{\sigma} (\IR^d)$, and $F \in \L^2 (\IR^d ; \IC^{d \times d})$ the unique solution $u \in \H^1_{\sigma} (\IR^d)$ to~\eqref{Eq: Resolvent problem} satisfies
\begin{align*}
 \int_{B(x_0 , r)} \lvert \lambda u \rvert^2 \, \d x + \int_{B(x_0 , r)} \lvert \lvert \lambda \rvert^{\frac{1}{2}} \nabla u \rvert^2 \, \d x &\leq C \int_{B(x_0 , 2 r)} \big( \lvert f \rvert^2 + \lvert \lvert \lambda \rvert^{\frac{1}{2}} F \rvert^2 \big) \, \d x \\
 &+ C \sum_{k = 2}^{\infty} \bigg( \frac{1}{1 + \lvert \lambda \rvert 2^{2 k} r^2} \bigg)^{\frac{\nu}{2}} \int_{B(x_0 , 2^k r)} \big( \lvert f \rvert^2 + \lvert \lvert \lambda \rvert^{\frac{1}{2}} F \rvert^2 \big) \, \d x.
\end{align*}
Here, the constant $C$ only depends on $d$, $\theta$, $\nu$, $\mu_{\bullet}$, and $\mu^{\bullet}$ and $\omega$ only depends on $d$, $\mu_{\bullet}$, and $\mu^{\bullet}$.
\end{theorem}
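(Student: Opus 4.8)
The idea is to split into the regimes $\lvert \lambda \rvert r^2 \le C_0$ and $\lvert \lambda \rvert r^2 \ge C_0$ for a constant $C_0 = C_0(d,\theta,\nu,\mu_\bullet,\mu^\bullet)$ fixed at the end, and in both to reduce everything to the non-local resolvent estimate (Theorem~\ref{Thm: Nonlocal resolvent estimate}) and the non-local Caccioppoli inequality (Theorem~\ref{Thm: Non-local Caccioppoli}), exploiting that $0 < \nu < 2 \le d$. Throughout write $\fg := \lvert f \rvert^2 + \lvert \lvert \lambda \rvert^{1/2} F \rvert^2$.

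In the regime $\lvert \lambda \rvert r^2 \le C_0$ I would pass to the natural resolvent scale $\rho := \max(r, \lvert \lambda \rvert^{-1/2})$, so that $\lvert \lambda \rvert \rho^2 \ge 1$ and $B(x_0,r) \subseteq B(x_0,\rho)$. Theorem~\ref{Thm: Nonlocal resolvent estimate} on the ball $B(x_0,\rho)$ bounds $\int_{B(x_0,\rho)} \lvert \lambda u \rvert^2$ by $C \sum_{j \ge 0} 2^{-\nu j} \int_{B(x_0,2^j\rho)} \fg$; for the gradient I would feed Theorem~\ref{Thm: Non-local Caccioppoli} (with all $c_k = 0$, at scale $\rho$) back into Theorem~\ref{Thm: Nonlocal resolvent estimate}, so that its leading term $\rho^{-2} \sum_k 2^{-(\nu+2)k} \int_{B(x_0,2^{k+1}\rho)} \lvert u \rvert^2$, after reindexing and using $\lvert \lambda \rvert \rho^2 \ge 1$, is again $\le C \sum_j 2^{-\nu j} \int_{B(x_0,2^j\rho)} \fg$ (the remaining terms being of that form already). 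Writing $\rho = 2^m r$ (rounding $m = \max(0,\lceil -\tfrac12 \log_2(\lvert \lambda \rvert r^2) \rceil)$ is harmless) and reindexing $k = j + m$ turns this into $C 2^{\nu m} \sum_{k \ge m} 2^{-\nu k} \int_{B(x_0,2^kr)} \fg$; on $k \ge m$ one has $\lvert \lambda \rvert 2^{2k} r^2 \ge 1$, hence $2^{\nu m} 2^{-\nu k} \le C (1 + \lvert \lambda \rvert 2^{2k} r^2)^{-\nu/2}$ with a constant depending on $C_0$, and the terms with $k \le 1$ are absorbed into $C \int_{B(x_0,2r)} \fg$.

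In the regime $\lvert \lambda \rvert r^2 \ge C_0$ I would localise via the Neumann problem on $B := B(x_0,2r)$. Restricting $f$ (whose restriction still lies in $\cL^2_\sigma(B)$) and $F$ to $B$ and solving~\eqref{Eq: Solenoidal distributional Neumann} gives $w \in \cH^1_\sigma(B)$ with pressure $\pi$ as in~\eqref{Eq: Variational Neumann} and the estimate~\eqref{Eq: Resolvent estimate Neumann}; its contribution to $\int_{B(x_0,r)} ( \lvert \lambda u \rvert^2 + \lvert \lvert \lambda \rvert^{1/2} \nabla u \rvert^2 )$ is $\le C \int_{B} \fg$. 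The remainder $v := u - w$ solves the \emph{homogeneous} system $\lambda v - \divergence(\mu \nabla v) + \nabla \psi = 0$, $\divergence v = 0$ in $B$, with $\psi := \widetilde{\phi} - \pi$, where $\widetilde{\phi}$ is the global pressure with $\lambda u - \divergence(\mu \nabla u) + \nabla \widetilde{\phi} = f + \divergence(F)$ in $\IR^d$ (obtained from the pressure in~\eqref{Eq: Resolvent problem} by absorbing the potential part of $\IP \divergence(F)$). Applying Lemma~\ref{Lem: Caccioppoli} to $(v,\psi)$ — in the refinement visible in its proof, where the pressure is integrated only over $\cC_1 := B(x_0,2r) \setminus B(x_0,r)$ against a free constant $c$ — and multiplying by $\lvert \lambda \rvert$ yields
\[
 \int_{B(x_0,r)} \big( \lvert \lambda v \rvert^2 + \lvert \lvert \lambda \rvert^{1/2} \nabla v \rvert^2 \big) \le \frac{C \lvert \lambda \rvert}{r^2} \int_{B} \lvert v \rvert^2 + \frac{C}{r^2} \int_{\cC_1} \lvert \psi - c \rvert^2 .
\]
In the first term I would split $v = u - w$: the $w$-part equals $C (\lvert \lambda \rvert r^2)^{-1} \int_B \lvert \lambda w \rvert^2 \le C \int_B \fg$ since $\lvert \lambda \rvert r^2 \ge 1$, while the $u$-part $C (\lvert \lambda \rvert r^2)^{-1} \int_B \lvert \lambda u \rvert^2$ is, by Theorem~\ref{Thm: Nonlocal resolvent estimate}, at most $C (\lvert \lambda \rvert r^2)^{-1} \sum_k 2^{-\nu k} \int_{B(x_0,2^kr)} \fg \le C \sum_k (1 + \lvert \lambda \rvert 2^{2k} r^2)^{-\nu/2} \int_{B(x_0,2^kr)} \fg$ because $\nu/2 < 1$ and $\lvert \lambda \rvert r^2 \ge 1$. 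In the pressure term I would take $c = \psi_{\cC_1}$ and write $\psi - c = (\widetilde{\phi} - \widetilde{\phi}_{\cC_1}) - (\pi - \pi_{\cC_1})$: the $\pi$-contribution is $\le C r^{-2} \lVert \pi \rVert_{\L^2(B)}^2 \le C (\lvert \lambda \rvert r^2)^{-1} \int_B \fg$ by~\eqref{Eq: Resolvent estimate Neumann}, and for the $\widetilde{\phi}$-contribution Lemma~\ref{Lem: Non-local pressure estimate} with $k = 1$ bounds $\lVert \widetilde{\phi} - \widetilde{\phi}_{\cC_1} \rVert_{\L^2(\cC_1)}$ by $C \sum_{\ell \ge 0} \theta_\ell ( \lVert \nabla u \rVert_{\L^2(\cC_\ell)} + \lVert F \rVert_{\L^2(\cC_\ell)} )$ with weights $\theta_\ell$ that are bounded and decay like $2^{-(d/2+1)\ell}$; Cauchy--Schwarz then gives $C r^{-2} \sum_\ell \theta_\ell \int_{B(x_0,2^\ell r)} ( \lvert \nabla u \rvert^2 + \lvert F \rvert^2 )$. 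Its $F$-part is of the required form because $\theta_\ell r^{-2} \le C \lvert \lambda \rvert (1 + \lvert \lambda \rvert 2^{2\ell} r^2)^{-\nu/2}$ as soon as $\lvert \lambda \rvert r^2$ exceeds a threshold depending on $d,\theta,\nu,\mu_\bullet,\mu^\bullet$ — this is where $C_0$ is fixed; it uses $\sup_\ell \theta_\ell 2^{\nu\ell} < \infty$ (valid since $\nu < d/2+1$) and $(\lvert \lambda \rvert r^2)^{1-\nu/2} \ge 1$ — and its $\nabla u$-part is treated by one more application of Theorem~\ref{Thm: Non-local Caccioppoli} (with exponent $d/2+1$ in place of $\nu$, admissible since $d/2+1 < d+2$, and $c_k = 0$) followed by Theorem~\ref{Thm: Nonlocal resolvent estimate}, which leaves $f$- and $F$-terms with a prefactor $(\lvert \lambda \rvert r^2)^{-1}$ and a $u$-term with a prefactor $(\lvert \lambda \rvert r^2)^{-2}$, each $\le (\lvert \lambda \rvert r^2)^{-\nu/2}$. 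Collecting the pieces — those on $B(x_0,r)$ and $B(x_0,2r)$ into the first summand, the rest into the series — proves the estimate.

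The step I expect to be the genuine obstacle is the non-local pressure term in the second regime: because $\widetilde{\phi}$ is non-local, the classical elliptic techniques (Davies' trick, or a naive iteration of the local Caccioppoli inequality, which keeps reintroducing $\nabla u$ far away) are obstructed, and all the decay in $k$ must be extracted from the geometric weights of Lemma~\ref{Lem: Non-local pressure estimate}. What makes the scheme close is precisely $\nu < 2$: it gives $\nu/2 < 1$, so that the prefactors $(\lvert \lambda \rvert r^2)^{-1}$ and $(\lvert \lambda \rvert r^2)^{-2}$ dominate the target $(\lvert \lambda \rvert r^2)^{-\nu/2}$, and $\nu < d/2 + 1$, so that the pressure weights outweigh the $2^{-\nu k}$ decay. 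The bulk of the remaining work is checking that every error term produced by the repeated reindexings is genuinely of the form $(1 + \lvert \lambda \rvert 2^{2k} r^2)^{-\nu/2} \int_{B(x_0,2^kr)} \fg$ or an unweighted $k \in \{0,1\}$ term.
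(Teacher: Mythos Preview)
Your argument is correct and follows essentially the same route as the paper: the same two-regime split at $\lvert\lambda\rvert r^2 \sim 1$, and in the large regime the same Neumann localisation on $B(x_0,2r)$, Lemma~\ref{Lem: Caccioppoli} for the homogeneous remainder, and closure via Lemma~\ref{Lem: Non-local pressure estimate}, Theorem~\ref{Thm: Non-local Caccioppoli}, and Theorem~\ref{Thm: Nonlocal resolvent estimate}. Minor differences: the paper simply takes $C_0 = 1$ (your flexible threshold is not needed, all the inequalities already hold there), and in the small regime the paper works directly at scale $r$ and uses $\lvert\lambda\rvert r^2 \le 1$ to convert $2^{-\nu k}$ into $(1+\lvert\lambda\rvert 2^{2k} r^2)^{-\nu/2}$ rather than rescaling to $\rho = \lvert\lambda\rvert^{-1/2}$ --- your rescaling is a slightly more elaborate but equivalent device, with the benefit that it also handles the gradient term cleanly, which the paper's proof glosses over in that regime.
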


\begin{proof}
Fix $f \in \L^2_{\sigma} (\IR^d)$, $F \in \L^2 (\IR^d ; \IC^{d \times d})$, and $\lambda \in \S_{\theta}$. Define $u := (\lambda + A)^{-1} (f + \IP \divergence(F))$ and let $\phi \in \L^2_{\loc} (\IR^d)$ be the associated pressure such that $u$ and $\phi$ solve~\eqref{Eq: Resolvent problem}. Let $x_0 \in \IR^d$ and $r > 0$. In the following, we consider two cases. \par
Let $\lambda$ and $r$ be such that $\lvert \lambda \rvert r^2 \leq 1$. In this case, Theorem~\ref{Thm: Nonlocal resolvent estimate} yields the estimate
\begin{align*}
 \int_{B(x_0 , r)} \lvert \lambda u \rvert^2 \, \d x &\leq C \sum_{k = 0}^{\infty} 2^{- \nu k} \int_{B(x_0 , 2^k r)} \big( \lvert f \rvert^2 + \lvert \lvert \lambda \rvert^{\frac{1}{2}} F \rvert^2 \big) \, \d x \\
 &\leq 2^{\frac{\nu}{2}} C \sum_{k = 0}^{\infty} \bigg( \frac{1}{1 + \lvert \lambda \rvert 2^{2 k} r^2} \bigg)^{\frac{\nu}{2}} \int_{B(x_0 , 2^k r)} \big( \lvert f \rvert^2 + \lvert \lvert \lambda \rvert^{\frac{1}{2}} F \rvert^2 \big) \, \d x.
\end{align*}

Thus, it is left to consider the case $\lvert \lambda \rvert r^2 > 1$. In this case, define $g := f|_{B(x_0 , 2r)}$ and $G := F|_{B(x_0 , 2r)}$. The definition of $\Lop^2_{\sigma} (B(x_0 , 2r))$ implies that $g \in \Lop^2_{\sigma} (B(x_0 , 2r))$. Then, there exists $u_1 \in \cH^1_{\sigma} (B(x_0 , 2r))$ such that for all $v \in \cH^1_{\sigma} (B(x_0 , 2r))$ it holds
\begin{align*}
 \lambda \int_{B(x_0 , 2 r)} u_1 \cdot \overline{v} \; \d x + \fb_{B(x_0 , 2 r)} (u_1 , v) = \int_{B(x_0 , 2 r)} g \cdot \overline{v} \; \d x - \int_{B(x_0 , 2 r)} G_{\alpha \beta} \cdot \overline{\partial_{\alpha} v_{\beta}} \; \d x.
\end{align*}
Let $\phi_1 \in \L^2 (B(x_0 , 2 r))$ denote the associated pressure. By~\eqref{Eq: Resolvent estimate Neumann} we find that
\begin{align}
\label{Eq: Estimate u1}
\begin{aligned}
 \| \lambda u_1 \|_{\L^2 (B(x_0 , 2 r))} + \lvert \lambda \rvert^{\frac{1}{2}} \| \nabla u_1 \|_{\L^2 (B (x_0 , 2 r))} &+ \lvert \lambda \rvert^{\frac{1}{2}} \| \phi_1 \|_{\L^2 (B(x_0 , 2 r))} \\
 &\qquad\leq C \big( \| f \|_{\L^2 (B (x_0 , 2 r))} + \lvert \lambda \rvert^{\frac{1}{2}} \| F \|_{\L^2 (B(x_0 , 2 r))} \big).
 \end{aligned}
\end{align}
Notice that the constant $C > 0$ only depends on $d$, $\theta$, $\mu_{\bullet}$, and $\mu^{\bullet}$. In particular, it does not depend on $x_0$ and $r$. \par
Now, define $u_2 := u - u_1$ and $\phi_2 := \phi - \phi_1$. Thus, to prove the desired result, we only have to control $u_2$ in $B(x_0 , r)$. By definitions of all functions, we find that
\begin{align*}
 \lambda \int_{B (x_0 , 2 r)} u_2 \cdot \overline{v} \; \d x + \fb_{B (x_0 , 2 r)} (u_2 , v) - \int_{B (x_0 , 2 r)} \phi_2 \, \overline{\divergence(v)} \; \d x = 0 \qquad (v \in \H^1_0 (B (x_0 , 2 r) ; \IC^d)),
\end{align*}
so that by virtue of Lemma~\ref{Lem: Caccioppoli} we have
\begin{align*}
 &\int_{B(x_0 , r)} \lvert \lambda u_2 \rvert^2 \, \d x + \int_{B(x_0 , r)} \lvert \lvert \lambda \rvert^{\frac{1}{2}} \nabla u_2 \rvert^2 \, \d x \\
 &\qquad\qquad\leq \frac{C \lvert \lambda \rvert}{r^2} \int_{B(x_0 , 2 r)} \lvert u_2 \rvert^2 \, \d x + \frac{C}{r^2} \int_{B(x_0 , 2 r) \setminus B(x_0 , r)} \lvert \phi_2 - \phi_{B(x_0 , 2 r) \setminus B(x_0 , r)} \rvert^2 \, \d x.
\end{align*}
Now, use that $u_2 = u - u_1$ and $\phi_2 = \phi - \phi_1$ followed by~\eqref{Eq: Estimate u1}, Lemma~\ref{Lem: Non-local pressure estimate}, and $\nu < 2 < 2 + d$ to deduce that
\begin{align*}
 &\int_{B(x_0 , r)} \lvert \lambda u_2 \rvert^2 \, \d x + \int_{B(x_0 , r)} \lvert \lvert \lambda \rvert^{\frac{1}{2}} \nabla u_2 \rvert^2 \, \d x \\
 &\leq \frac{C \lvert \lambda \rvert}{r^2} \int_{B(x_0 , 2 r)} \lvert u_2 \rvert^2 \; \d x + \frac{C}{r^2} \int_{B(x_0 , 2r)} \lvert \phi_1 \rvert^2 \, \d x  + \frac{C}{r^2} \int_{B(x_0 , 2 r) \setminus B(x_0 , r)} \lvert \phi - \phi_{B(x_0 , 2 r) \setminus B(x_0 , r)} \rvert^2 \, \d x \\
 &\leq \frac{C}{\lvert \lambda \rvert r^2} \int_{B(x_0 , 2 r)} \lvert f \rvert^2 \, \d x + \frac{C \lvert \lambda \rvert}{r^2} \int_{B(x_0 , 2 r)} \lvert u \rvert^2 \, \d x + \frac{C}{r^2} \int_{B(x_0 , 2 r) \setminus B(x_0 , r)} \lvert \phi - \phi_{B(x_0 , 2 r) \setminus B(x_0 , r)} \rvert^2 \, \d x \allowdisplaybreaks \\
 &\leq \frac{C}{\lvert \lambda \rvert r^2} \int_{B(x_0 , 2 r)} \lvert f \rvert^2 \, \d x + C \bigg( \frac{\lvert \lambda \rvert}{r^2} \int_{B(x_0 , 2 r)} \lvert u \rvert^2 \, \d x + \frac{1}{r^2} \sum_{k = 0}^{\infty} 2^{- \nu k} \int_{B(x_0 , 2^{k + 1} r)} \lvert \nabla u \rvert^2 \, \d x\bigg) \\
 &\qquad + \frac{C}{\lvert \lambda \rvert r^2} \sum_{k = 0}^{\infty} 2^{- \nu k} \int_{B(x_0 , 2^{k + 1} r)} \lvert \lvert \lambda \rvert^{\frac{1}{2}} F \rvert^2 \, \d x.
\end{align*}
Now, employ Theorem~\ref{Thm: Non-local Caccioppoli} to the second term on the right-hand side followed by the non-local resolvent estimate in Theorem~\ref{Thm: Nonlocal resolvent estimate} so as to get
\begin{align*}
 \int_{B(x_0 , r)} \lvert \lambda u_2 \rvert^2 \, \d x &+ \int_{B(x_0 , r)} \lvert \lvert \lambda \rvert^{\frac{1}{2}} \nabla u_2 \rvert^2 \, \d x \\
 &\leq \frac{C}{\lvert \lambda \rvert r^2} \int_{B(x_0 , 2 r)} \lvert f \rvert^2 \, \d x + \frac{C}{\lvert \lambda \rvert^2 r^4} \sum_{k = 0}^{\infty} 2^{- \nu k} \int_{B(x_0 , 2^{k + 1} r)} \big( \lvert f \rvert^2 + \lvert \lvert \lambda \rvert^{\frac{1}{2}} F \rvert^2 \big) \, \d x \\
 &\qquad + \frac{C}{\lvert \lambda \rvert r^2} \sum_{k = 0}^{\infty} 2^{- \nu k} \int_{B(x_0 , 2^{k + 1} r)} \big( \lvert f \rvert^2 + \lvert \lvert \lambda \rvert^{\frac{1}{2}} F \rvert^2 \big) \, \d x.
\end{align*}
Finally, using that $\lvert \lambda \rvert r^2 > 1$ and $\nu < 2$, we get
\begin{align*}
 \int_{B(x_0 , r)} \lvert \lambda u_2 \rvert^2 \, \d x &+ \int_{B(x_0 , r)} \lvert \lvert \lambda \rvert^{\frac{1}{2}} \nabla u_2 \rvert^2 \, \d x \leq C\sum_{k = 2}^{\infty} \bigg( \frac{1}{\lvert \lambda \rvert 2^{2 k} r^2} \bigg)^{\frac{\nu}{2}} \int_{B(x_0 , 2^k r)} \big( \lvert f \rvert^2 + \lvert \lvert \lambda \rvert^{\frac{1}{2}} F \rvert^2 \big) \, \d x. \qedhere
\end{align*}
\end{proof}

\begin{remark}
\label{Rem: Gradient of resolvent}
We just proved slightly more than stated in Theorem~\ref{Thm: Off-diagonal for resolvent}. Indeed, if $\lvert \lambda \rvert r^2 > 1$, we proved further estimates on $\nabla u$ that are given by
\begin{align*}
 \lvert \lambda \rvert^{\frac{1}{2}} \| \nabla (\lambda + A)^{-1} f \|_{\L^2 (B(x_0 , 2 r))} \leq C \| f \|_{\L^2 (B(x_0 , 2 r))} + C\sum_{k = 2}^{\infty} \bigg( \frac{1}{\lvert \lambda \rvert 2^{2 k} r^2} \bigg)^{\frac{\nu}{4}} \| f \|_{\L^2(B(x_0 , 2^k r))}
\end{align*}
and
\begin{align*}
 \| \nabla (\lambda + A)^{-1} \IP \divergence(F) \|_{\L^2 (B(x_0 , 2 r))} \leq C \| F \|_{\L^2 (B(x_0 , 2 r))} + C\sum_{k = 2}^{\infty} \bigg( \frac{1}{\lvert \lambda \rvert 2^{2 k} r^2} \bigg)^{\frac{\nu}{4}} \| F \|_{\L^2 (B(x_0 , 2^k r))}.
\end{align*}
\end{remark}

\section{Estimates on the generalized Stokes semigroup}

\noindent Since $A$ satisfies the resolvent estimates
\begin{align*}
 \lvert \lambda \rvert \| (\lambda + A)^{-1} f \|_{\L^2} \leq C \| f \|_{\L^2} \qquad (\lambda \in \S_{\omega}),
\end{align*}
for some $\omega \in (\pi / 2 , \pi)$ the generalized Stokes operator $- A$ is the infinitesimal generator of a bounded analytic semigroup $(\e^{- t A})_{t \geq 0}$ which is represented via the Cauchy integral formula
\begin{align}
\label{Eq: Cauchy integral}
 \e^{- t A} = \frac{1}{2 \pi \ii} \int_{\gamma_t} \e^{t \lambda} (\lambda + A)^{-1} \, \d \lambda \qquad (t > 0).
\end{align}
Here, the path $\gamma_t$ runs through $\partial (B(0 , t^{-1}) \cup S_{\vartheta})$ for some $\vartheta \in (\pi / 2 , \omega)$ in a counterclockwise manner. This representation by the Cauchy integral formula allows to transfer estimates on the resolvent to estimates on the semigroup. For example, it is well-known that the estimates~\eqref{Eq: Resolvent estimate} and~\eqref{Eq: Resolvent estimate 2} used within~\eqref{Eq: Cauchy integral} directly yield for all $f \in \L^2_{\sigma} (\IR^d)$, $F \in \L^2 (\IR^d ; \IC^{d \times d})$, and $t > 0$ the semigroup estimates
\begin{align}
\label{Eq: Semigroup estimate 1}
 \| \e^{- t A} f \|_{\L^2} + t^{\frac{1}{2}} \|\nabla \e^{- t A} f \|_{\L^2} + t \| A \e^{- t A} f \|_{\L^2} \leq C \| f \|_{\L^2}
\end{align}
and
\begin{align}
\label{Eq: Semigroup estimate 2}
 t^{\frac{1}{2}} \| \e^{- t A} \IP \divergence(F) \|_{\L^2} + t \|\nabla \e^{- t A} \IP \divergence(F) \|_{\L^2} \leq C \| F \|_{\L^2}.
\end{align}
The following proof of Theorem~\ref{Thm: Generalized Stokes semigroup} shows that this transfer of estimates is also valid for the resolvent estimates established in Theorem~\ref{Thm: Off-diagonal for resolvent}.

\begin{proof}[Proof of Theorem~\ref{Thm: Generalized Stokes semigroup}]
Let $f \in \L^2_{\sigma} (\IR^d)$ and $F \in \L^2 (\IR^d ; \IC^{d \times d})$. Combining the conclusion of Theorem~\ref{Thm: Off-diagonal for resolvent} with~\eqref{Eq: Cauchy integral} directly yields for $x_0 \in \IR^d$ and $r > 0$ that
\begin{align*}
 &\| \e^{- t A} (f + \IP \divergence(F)) \|_{\L^2 (B(x_0 , r))} \\
 &\leq \frac{1}{2 \pi} \int_{\gamma_t} \e^{t \Re(\lambda)} \big\{ \| (\lambda + A)^{-1} (f + \IP \divergence (F)) \|_{\L^2 (B(x_0 , r))} \\
 &\qquad + t^{\frac{1}{2}} \| \nabla (\lambda + A)^{-1} (f + \IP \divergence(F)) \|_{\L^2 (B(x_0 , r))} \big\} \, \lvert \d \lambda \rvert \\
 &\leq C \int_{\gamma_t} \e^{t \Re(\lambda)} (\lvert \lambda \rvert^{-1} + t^{\frac{1}{2}} \lvert \lambda \rvert^{- \frac{1}{2}}) (\| f \|_{\L^2 (B(x_0 , 2^k r))} + \| \lvert \lambda \rvert^{\frac{1}{2}} F \|_{\L^2(B(x_0 , 2^k r))}) \, \lvert \d \lambda \rvert \\
 &+ C \sum_{k = 2}^{\infty} \int_{\gamma_t} \e^{t \Re(\lambda)} \bigg( \frac{1}{1 + \lvert \lambda \rvert 2^{2 k} r^2} \bigg)^{\frac{\nu}{4}} (\lvert \lambda \rvert^{-1} + t^{\frac{1}{2}} \lvert \lambda \rvert^{- \frac{1}{2}}) (\| f \|_{\L^2 (B(x_0 , 2^k r))} + \| \lvert \lambda \rvert^{\frac{1}{2}} F \|_{\L^2(B(x_0 , 2^k r))}) \, \lvert \d \lambda \rvert.
\end{align*}
Now, perform the substitution $\lambda t = \mu$ and use that for $\mu \in \gamma_1$ one has
\begin{align*}
 \frac{1}{1 + \frac{\lvert \mu \rvert 2^{2k} r^2}{t}} \leq \frac{1}{1 + \frac{2^{2k} r^2}{t}}\cdotp
\end{align*}
This readily yields that
\begin{align*}
 &\| \e^{- t A} (f + \IP \divergence(F)) \|_{\L^2 (B(x_0 , r))} \\
 &\leq C \sum_{k = 0}^{\infty} \bigg( \frac{1}{1 + \frac{2^{2k} r^2}{t}} \bigg)^{\frac{\nu}{4}} \int_{\gamma_1} \e^{\Re(\mu)} (\lvert \mu \rvert^{-1} + \lvert \mu \rvert^{- \frac{1}{2}}) (\| f \|_{\L^2 (B(x_0 , 2^k r))} + t^{- \frac{1}{2}} \| \lvert \mu \rvert^{\frac{1}{2}} F \|_{B(x_0 , 2^k r)}) \, \lvert \d \mu \rvert
\end{align*}
and thus already the desired estimate. \par
To estimate $t A \e^{- t A} (f + \IP \divergence(F))$, notice that
\begin{align*}
 A \e^{- t A} = \frac{1}{2 \pi \ii} \int_{\gamma_t} \e^{t \lambda} A (\lambda + A)^{-1} \, \d \lambda &= \frac{1}{2 \pi \ii} \int_{\gamma_t} \e^{t \lambda} (\Id - \lambda (\lambda + A)^{-1}) \, \d \lambda \\
 &= - \frac{1}{2 \pi \ii} \int_{\gamma_t} \lambda \e^{t \lambda} (\lambda + A)^{-1} \, \d \lambda.
\end{align*}
Now, the desired estimate follows analogously as above.
\end{proof}

\begin{remark}
If we assume that $r^2 / t > 1$, then all $\lambda \in \gamma_t$ satisfy $\lvert \lambda \rvert r^2 > 1$ so that in this case the estimates from Remark~\ref{Rem: Gradient of resolvent} together with the proof of Theorem~\ref{Thm: Generalized Stokes semigroup} yield the following gradient estimate on the generalized Stokes semigroup: there exists a constant $C > 0$ such that for all $f \in \L^2_{\sigma} (\IR^d)$, $F \in \L^2 (\IR^d ; \IC^{d \times d})$, and all $t > 0$ we have
\begin{align*}
 t^{\frac{1}{2}} \| \nabla \e^{- t A} f \|_{\L^2 (B(x_0 , r))} \leq C \| f \|_{\L^2 (B(x_0 , 2 r))} + C \sum_{k = 2}^{\infty} \bigg( \frac{2^{2 k} r^2}{t} \bigg)^{- \frac{\nu}{4}} \| f \|_{\L^2 (B(x_0 , 2^k r))}
\end{align*}
and
\begin{align*}
 \| \nabla \e^{- t A} F \|_{\L^2 (B(x_0 , r))} \leq C \| F \|_{\L^2 (B(x_0 , 2 r))} + C \sum_{k = 2}^{\infty} \bigg( \frac{2^{2 k} r^2}{t} \bigg)^{- \frac{\nu}{4}} \| F \|_{\L^2 (B(x_0 , 2^k r))}.
\end{align*}
\end{remark}

\begin{proof}[Proof of Corollary~\ref{Cor: Generalized Stokes semigroup}]
We distinguish two cases. Assume first that $2^{2 k_0} r^2 / t < 1$. Then by using the global $\L^2$-estimates~\eqref{Eq: Semigroup estimate 1}, we find that
\begin{align*}
 \| \e^{- t A} f \|_{\L^2 (B(x_0 , r))} + t \| A \e^{- t A} f \|_{\L^2 (B(x_0 , r))} &\leq C \| f \|_{\L^2(\IR^d)} \\
 &\leq 2^{\frac{\nu}{4}} \bigg( 1 + \frac{2^{2 k_0} r^2}{t} \bigg)^{- \frac{\nu}{4}} \| f \|_{\L^2 (B(x_0 , 2^{k_0} r) \setminus B(x_0 , 2^{k_0 - 1} r))}.
\end{align*}
Now, assume that $2^{2 k_0} r^2 / t \geq 1$. Then Theorem~\ref{Thm: Generalized Stokes semigroup} implies that
\begin{align*}
 \| \e^{- t A} f \|_{\L^2 (B(x_0 , r))} &+ t \| A \e^{- t A} f \|_{\L^2 (B(x_0 , r))} \\
 &\qquad\leq \sum_{k = k_0}^{\infty} \bigg( 1 + \frac{2^{2 k} r^2}{t} \bigg)^{- \frac{\nu}{4}} \| f \|_{\L^2 (B(x_0 , 2^{k_0} r) \setminus B(x_0 , 2^{k_0 - 1} r))} \\
 &\qquad\leq \bigg( \frac{2^{2 k_0} r^2}{t} \bigg)^{- \frac{\nu}{4}} \sum_{k = k_0}^{\infty} 2^{- \frac{\nu}{2} (k - k_0)} \| f \|_{\L^2 (B(x_0 , 2^{k_0} r) \setminus B(x_0 , 2^{k_0 - 1} r))} \\
 &\qquad\leq C \bigg( 1 + \frac{2^{2 k_0} r^2}{t} \bigg)^{- \frac{\nu}{4}} \| f \|_{\L^2 (B(x_0 , 2^{k_0} r) \setminus B(x_0 , 2^{k_0 - 1} r))}.
\end{align*}

To estimate the terms involving $\e^{- t A} \IP \divergence(F)$ proceed similarly, but by employing~\eqref{Eq: Semigroup estimate 2} in the first case and Theorem~\ref{Thm: Generalized Stokes semigroup} in the second case. We omit further details.
\end{proof}

\begin{bibdiv}
\begin{biblist}

\bibitem{Aronson}
D.~G.~Aronson.
\newblock Bounds for the fundamental solution of a parabolic equation.
\newblock Bull.\@ Amer.\@ Math.\@ Soc.~\textbf{73} (1967), 890--896.

\bibitem{Auscher}
P.~Auscher.
\newblock Regularity theorems and heat kernel for elliptic operators.
\newblock J.\@ London Math.\@ Soc.~(2) \textbf{54} (1996), no.~2, 284--296.

\bibitem{Auscher_Memoirs}
P.~Auscher.
\newblock On necessary and sufficient conditions for $L^p$-estimates of Riesz transforms associated to elliptic operators on $\IR^n$ and related estimates.
\newblock Mem.\@ Amer.\@ Math.\@ Soc.~~\textbf{186} (2007), no.~871.

\bibitem{Auscher_Frey}
P.~Auscher and D.~Frey.
\newblock On the well-posedness of parabolic equations of Navier-Stokes type with $\mathrm{BMO}^{-1}$ data.
\newblock J.\@ Inst.\@ Math.\@ Jussieu~\textbf{16} (2017), no.~5, 947--985.

\bibitem{Auscher_Hofmann_Lacey_McIntosh_Tchamitchian}
P.~Auscher, S.~Hofmann, M.~Lacey, A.~McIntosh, and P.~Tchamitchian.
\newblock The solution of the Kato square root problem for second order elliptic operators on $\IR^n$.
\newblock Ann.\@ of Math.~(2) \textbf{156} (2002), no.~2, 633--654.

\bibitem{Davies}
E.~B.~Davies.
\newblock {\em Heat kernels and spectral theory\/}.
\newblock Cambridge University Press, Cambridge, 1990.

\bibitem{Davies_trick}
E.~B.~Davies.
\newblock Uniformly elliptic operators with measurable coefficients.
\newblock J.\@ Funct.\@ Anal.~\textbf{132} (1995), no.~1, 141--169.

\bibitem{Davies_example}
E.~B.~Davies.
\newblock Limits on $L^p$ regularity of self-adjoint elliptic operators.
\newblock J.\@ Differential Equations~\textbf{135} (1997), no.~1, 83--102.

\bibitem{Frehse}
J.~Frehse.
\newblock An irregular complex valued solution to a scalar uniformly elliptic equation.
\newblock Calc.\@ Var.\@ Partial Differential Equations~\textbf{33} (2008), no.~3, 263--266.

\bibitem{Koch_Tataru}
H.~Koch and D.~Tataru.
\newblock Well-posedness for the Navier-Stokes equations.
\newblock Adv.\@ Math.~\textbf{157} (2001), no.~1, 22--35.

\bibitem{Mazya_Nazarov_Plamenevskii}
V.~G.~Maz'ya, S.~A.~Nazarov, and B.~A.~Plamenevskii.
\newblock Absence of De Giorgi-type theorems for strongly elliptic equations with complex coefficients.
\newblock J.\@ Math.\@ Sov.~\textbf{28} (1985), 726--739.

\bibitem{Tolksdorf}
P.~Tolksdorf.
\newblock A non-local approach to the generalized Stokes operator with bounded measurable coefficients.
\newblock Available on \url{arXiv:2011.13771}.

\end{biblist}
\end{bibdiv}
\end{document}